\renewenvironment{abstract}
{\quotation\small\noindent\rule{\linewidth}{.5pt}\par\smallskip
	{\bfseries\abstractname.}}
{\par\noindent\rule{\linewidth}{.5pt}\endquotation}
\newtheorem{The}{Theorem}[section]
\newtheorem{Cor}[The]{Corollary}
\newtheorem{Def}[The]{Definition}
\newtheorem{Lem}[The]{Lemma}
\newtheorem{Rem}[The]{Remark}
\newtheorem{Exam}[The]{Example}
\newtheorem{Prop}[The]{Proposition}
\newcommand{\re}{\ensuremath{\mathbb{R}}}
\newcommand{\co}{\ensuremath{\mathbb{C}}}
\newcommand{\fd}{\ensuremath{\mathbb{F}}}
\newcommand{\nm}{\ensuremath{\Vert}}
\newcommand{\nmr}{\ensuremath{\right\Vert}}
\newcommand{\nml}{\ensuremath{\left\Vert}}
\newcommand{\abf}{\ensuremath{\textbf{a}}}
\newcommand{\bbf}{\ensuremath{\textbf{b}}}
\newcommand{\cbf}{\ensuremath{\textbf{c}}}
\newcommand{\dbf}{\ensuremath{\textbf{d}}}
\newcommand{\fab}{\ensuremath{f_{\textbf{a},\textbf{b}}}}
\newcommand{\fabt}{\ensuremath{f_{\textbf{a},\tilde{\textbf{b}}}}}
\newcommand{\fcd}{\ensuremath{f_{\textbf{c},\textbf{d}}}}
\newcommand{\fdct}{\ensuremath{f_{\textbf{c},\tilde{\textbf{d}}}}}
\newcommand{\muab}{\ensuremath{\mu_{\abf,\bbf}}}
\newcommand{\mucd}{\ensuremath{\mu_{\cbf,\dbf}}}
\newcommand{\ibf}{\ensuremath{\textbf{i}}}
\newcommand{\ga}{\ensuremath{g_{\textbf{a}}}}
\newcommand{\gc}{\ensuremath{g_{\textbf{c}}}}
\newcommand{\Vab}{\ensuremath{X_{\abf,\bbf}}}
\newcommand{\Vcd}{\ensuremath{X_{\cbf,\dbf}}}
\newcommand{\Va}{\ensuremath{X_{\abf}}}
\DeclareMathOperator{\fl}{\ensuremath{fil}}
\DeclareMathOperator{\lcm}{\ensuremath{lcm}}
\renewcommand{\Re}{\operatorname{Re}}
\renewcommand{\Im}{\operatorname{Im}}
\NewDocumentCommand{\mybar}{ O{0.7} O{2pt} m }{
    \mathrlap{\hspace{#2}\overline{\scalebox{#1}[1]{\phantom{\ensuremath{#3}}}}}\ensuremath{#3}
}
\newcommand*\cj[1]{\mybar{#1}}
\def\blfootnote{\gdef\@thefnmark{}\@footnotetext}
\title[Short title]{Lipschitz Geometry of Mixed Pham-Brieskorn Singularities}
\author{Inácio Rabelo}
\thanks{The author was supported by the Coordenação de Aperfeiçoamento de Pessoal de Nível Superior - Brasil (CAPES) - Finance Code 001.}
\address{Instituto de Ciências Matematicas e de Computação, Av. Trabalhador São-Carlense 400, Centro. Caixa Postal: 668 CEP 13560-970, São Carlos SP, Brasil}
\email{rabeloinacio@usp.br}
\numberwithin{equation}{section}
\begin{document}

\maketitle

\begin{center}
\dedicatory{\it \small Dedicated to the memory of Professor Maria Ruas (Cidinha)}
\end{center}

\begin{abstract}
    We give conditions for topological and bi-Lipschitz equivalences within a class of mixed singularities of Pham-Brieskorn type. As a consequence, we construct infinite families that are topologically trivial but have distinct bi-Lipschitz types. We also investigate this problem in the context of mixed surfaces defined by these singularities in the case of two complex variables, deriving conditions for inner, outer, and ambient bi-Lipschitz equivalences. In particular, we obtain an invariant of the subanalytic outer geometry of the associated mixed surfaces, which is determined by the exponents. 

    \smallskip
    \noindent \textbf{Keywords.} Mixed polynomials, Pham-Brieskorn singularities, Lipschitz geometry.

    \smallskip
    \noindent \textbf{Mathematics Subject Classification.}  14B05, 14J17, 32B20, 32C05. 
\end{abstract}

\section{Introduction}

Let $\abf = (a_{1}, \dots, a_{n})$ and $\textbf{b} = (b_{1}, \dots, b_{n})$ be vectors of integers such that $0<a_{1} \le \dots \le a_{n}$, $b_{i} \ge 0$ for all $i$, and consider the function germs defined by
\begin{equation*}\label{eqfam}
    \begin{split}
        \fab &= z_{1}^{a_{1}+b_{1}}\cj{z}_{1}^{b_{1}} + \dots + z_{n}^{a_{n}+b_{n}}\cj{z}_{n}^{b_{n}}, \\
        \ga &= z_{1}^{a_{1}} + \dots + z_{n}^{a_{n}}.
    \end{split}
\end{equation*}
The polynomial $\ga$ is a Pham-Brieskorn polynomial and we shall call $\fab$ a \textit{mixed Pham-Brieskorn} polynomial. Seade and Seade-Ruas-Verjovsky first studied this type of mixed function in \cite{sead} and \cite{ruas3} as examples of real isolated singularities admitting a Milnor fibration on a sufficiently small sphere $\mathbb{S}_{\epsilon}^{2n-1}$ in the usual form $\theta_{\abf,\bbf} := \fab/\nm \fab \nm: \mathbb{S}_{\epsilon}^{2n-1}\setminus K_{\abf,\bbf} \longrightarrow \mathbb{S}^{1}$, where $K_{\abf,\bbf} = \mathbb{S}_{\epsilon}^{2n-1} \cap \fab^{-1}(0)$ is the link. The germs $\fab$ and $\ga$ are related as follows. There exists a germ of homeomorphism $\phi : (\co^{n},0) \longrightarrow (\co^{n},0)$ such that $\fab \circ \phi = \ga$. Moreover, let $\theta_{\abf} := \ga/\nm \ga \nm: \mathbb{S}_{\epsilon}^{2n-1}\setminus K_{\abf} \longrightarrow \mathbb{S}^{1}$ be the Milnor fibration of $\ga$, where $K_{\abf} = \mathbb{S}_{\epsilon}^{2n-1} \cap \ga^{-1}(0)$ is the link. In \cite{oka}, Oka proves a stronger assertion relating the fibrations $\theta_{\abf,\bbf}$ and $\theta_{\abf}$: there exists a fiber preserving diffeomorphism $\varphi: (\mathbb{S}_{\epsilon}^{2n-1}, K_{\abf,\bbf}) \longrightarrow (\mathbb{S}_{\epsilon}^{2n-1}, K_{\abf})$.

The existence of a fibration as above is known as \textit{strong Milnor condition}, which remounts to a long-standing question raised by Milnor in his classical book \cite{miln} concerning the generalization, for real analytic maps, of his classical fibration theorem for holomorphic functions. The existence problem is treated in \cite{church} and \cite{raim}, and Milnor fibration type theorems are the subject of \cite{rai1}, \cite{rai2}, \cite{rai3}, \cite{seacisn}, and the references therein, to cite a few. As a natural generalization of holomorphic functions, mixed functions inherit several properties from this context and have a prominent and central role in the area. We refer the reader to \cite{oka2} for a general account. Our primary objective here is to investigate mixed singularities and varieties from the Lipschitz geometry point of view. As will be exemplified later by the characteristics of the family above, the properties invariant by Lipschitz equivalence lie between the topological and analytic realms, which justifies this approach.

In this direction, we derive conditions on the vectors of exponents in the case of a pair $\fab$ and $\fcd$ being topologically and bi-Lipschitz equivalent, where the mixed polynomials and functions are taken as real analytic map germs $(\re^{2n},0) \longrightarrow (\re^{2},0)$. Within this family, our results show that the bi-Lipschitz type of $\fab$ is determined by $\abf$ and $\bbf$. The proof is based on a theorem due to Yoshinaga-Suzuki about the topological types of Pham-Brieskorn hypersurfaces (Theorem \ref{etsu}) and a weaker version of the Splitting Lemma for bi-Lipschitz equivalence (Theorem \ref{splitt2}). We also cover the case $a_{i} = 0$ for some, but not all, indices $i$. A major feature of this case is that the singularity is not isolated. Nonetheless, the existence of Milnor fibrations and other properties was investigated in \cite{raf}. 

The inner bi-Lipschitz geometry of semialgebraic surfaces in the local case was classified by L. Birbrair in \cite{levsu}. Using the contact of semialgebraic arcs, we can determine the associated invariants and then the inner geometry of the mixed surfaces defined by $\fab : (\co^{2},0) \longrightarrow (\co,0)$. In most cases, they are inner equivalent to the standard cone in $\re^{3}$, also called \textit{metric cone}. Furthermore, we derive necessary conditions for the equivalence of the pairs $\fab^{-1}(0),\ga^{-1}(0)$ and $\fab^{-1}(0),\fcd^{-1}(0)$ relative to the three possible bi-Lipschitz equivalences of sets. Inspired by the case of complex plane curves, we shall see that the exponents $\abf$ and $\bbf$ determine an invariant, which we denote by $\muab$, of the outer geometry of the associated mixed surfaces. We also investigate the Lipschitz normal embedding property for these surfaces. This aims to understand the bi-Lipschitz properties of mixed functions and varieties through the relations with well-established complex cases and their simpler representatives. In the complex case and any dimension, this question is studied in \cite{samje}, where the authors proved that the multiplicity of Pham-Brieskorn varieties is preserved under bi-Lipschitz equivalence.

The article is organized as follows. Section \ref{prelip} recalls basic definitions of Lipschitz Geometry of analytic sets and singularities. We state results on invariants and finite determinacy for weighted homogeneous map germs. In Section \ref{topeq}, we study conditions on the exponents to have topological equivalence and, in particular, topological submersions. In Sections \ref{lipeq} and \ref{inff}, we apply these results to study the bi-Lipschitz equivalence problem and construct infinite families with the same topological, but distinct Lipschitz types. Finally, Section \ref{mixsuf} is devoted to the study of the mixed surfaces defined by $\fab: (\co^{2},0) \longrightarrow (\co,0)$.

We address a last remark. One may consider the polynomials $\fab$ with coefficients, that is, 
\begin{align*}
    f_{\abf,\bbf,\lambda} = \lambda_{1}z_{1}^{a_{1}+b_{1}}\cj{z}_{1}^{b_{1}} + \dots + \lambda_{n}z_{n}^{a_{n}+b_{n}}\cj{z}_{n}^{b_{n}},
\end{align*}
where $\lambda = (\lambda_{1}, \dots, \lambda_{n}) \in \co^{n}$ with $\lambda_{i} \neq 0$ for all $i$. If $a_{i} \ge 1$ for all $i$, then \cite[Lemma 8]{oka} states that up to a linear coordinate change, we may suppose the coefficients equal to $1$. If $a_{i} = 0$ for some indices $i$, this is no longer true, but it is immediate to verify that all conditions on the vectors of exponents derived in our theorems still hold. In this case, by using real coefficients, it is possible to conclude that the vector of coefficients does not determine the bi-Lipschitz type of $f_{\abf,\bbf, \lambda}$. Namely, there are bi-Lipschitz equivalent pairs $f_{\abf,\bbf,\lambda}, f_{\abf,\bbf, \gamma}$ for which $\lambda \neq \gamma$, where $\lambda, \gamma \in \re^{n}$ and $\lambda_{i},\gamma_{i} \neq 0$.

\section{Lipschitz geometry}\label{prelip}

\subsection{Singularities and analytic sets}

We recall the basic definitions of Lipschitz Geometry of sets and singularities. We refer the reader to the introductory text \cite{ruas2}. Let $(X,0) \subset \re^{n}$ be the germ of an analytic set. There exist two natural geometric structures on $(X,0)$ defined from the Euclidean metric. The \textit{outer metric} is the induced metric, that is, $d_{o}(x,y) = \nm x-y \nm$ for all $x,y \in X$. The \textit{inner metric} is the length metric defined by $d_{i}(x,y) = \inf \ell(\gamma)$ among all the rectifiable arcs $\gamma$ in $X$ connecting $x, y \in X$, where $\ell(\gamma)$ denotes its length. Given two germs of analytic sets $(X,0) \subset \re^{n}$, $(Y,0) \subset \re^{m}$, we can consider the following equivalence relations.
\begin{itemize}
    \item The inner equivalence: if there exists a homeomorphism $\phi:(X,0) \longrightarrow (Y,0)$ which is bi-Lipschitz with respect to the inner metric.
    \item The outer equivalence: if there exists a homeomorphism $\phi:(X,0) \longrightarrow (Y,0)$ which is bi-Lipschitz with respect to the outer metric.
    \item Ambient equivalence: if $m = n$ and there exists a bi-Lipschitz homeomorphism $\phi:(\re^{n},0) \longrightarrow (\re^{n},0)$ such that $\phi(X) = Y$.
\end{itemize}

Notice that the ambient equivalence implies the outer. By \cite[Remark 1.2]{tibar}, the outer equivalence also implies the inner. Now, we consider the bi-Lipschitz equivalence of smooth map germs or singularities. In what follows, $\fd = \re, \co$

Let $C(n,p)$ denote the set of smooth map germs $f: (\fd^{n},0) \longrightarrow (\fd^{p},0)$ and $J^{k}(n,p)$ the set of $k$-jets of elements of $C(n,p)$. Two germs $f,g \in C(n,p)$ are bi-Lipschitz-equivalent if there exists a germ of Lipschitz homeomorphism $\phi$ such that its inverse $\phi^{-1}$ is also a Lipschitz homeomorphism and $g = f \circ \phi^{-1}$. A germ $f \in C(n,p)$ is $k$-bi-Lipschitz determined if there exist an integer $k$ such that $f$ is bi-Lipschitz-equivalent to all germs $g$ such that $j^{k}_{0}(g) = j^{k}_{0}(f)$, where $j^{k}_{0}(g)$ and $j^{k}_{0}(f)$ are the $k$-jets at the the origin of $g$ and $f$, respectively. If $f$ is bi-Lipschitz determined for some $k$, we say that $f$ is bi-Lipschitz finitely determined. We refer to topological equivalence if $\phi$ is only a homomorphism. These relations are also known as \textit{right equivalence} in the literature. In the rest of the article, by equivalence (respectively, topological equivalence) and finite determinacy, we mean the previous equivalences under bi-Lipschitz homeomorphisms (respectively, homeomorphisms).

The Taylor expansion at the origin of an analytic function germ $f : (\fd^{n},0) \longrightarrow (\fd,0)$ can be written as $f = \sum_{i=k}^{\infty}f_{k}$, where $f_{i}$ is a homogeneous polynomial of degree $k$ for each $i$. The lowest degree $k$ is called the \textit{multiplicity} of $f$, denoted by $m_{f}$, and the associated homogeneous polynomial is denoted by $H_{f}$. We have the following results.

\begin{The}[Section 4, \cite{sara} - Corollary, \cite{trot}] \label{inva}\hfill
\begin{enumerate}
    \item If $f,g : (\fd^{n},0) \longrightarrow (\fd^{p},0)$ are bi-Lipschitz equivalent smooth maps germs, then the rank of $f$ at $0$ is equal to the rank of $g$ at $0$.
    \item If $f,g : (\fd^{n},0) \longrightarrow (\fd,0)$ are bi-Lipschitz equivalent smooth function germs, then $m_{f} = m_{g}$. Moreover, $H_{f}$ and $H_{g}$ are also bi-Lipschitz equivalent as well as their singular sets $\Sigma(H_{f})$ and $\Sigma(H_{g})$.
\end{enumerate}
\end{The}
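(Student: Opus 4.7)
The plan is to treat the two assertions separately: assertion~(1) by a Rademacher argument followed by passage to the limit in the chain rule, and assertion~(2) by a direct size estimate for the multiplicity together with a rescaling/blow-up argument for the initial forms.

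For assertion~(1), write $g = f \circ \phi$ with $\phi$ a bi-Lipschitz homeomorphism of $(\fd^{n},0)$ with constants $c, C > 0$. By Rademacher's theorem, $\phi$ is differentiable almost everywhere, and at each point $x$ of differentiability the bi-Lipschitz bounds give $\nm D\phi(x) \nm \le C$ and $\nm D\phi(x)^{-1} \nm \le c^{-1}$, so $D\phi(x)$ lies in a fixed compact subset $K \subset GL(n,\fd)$. Pick a sequence $x_{k} \to 0$ at which $\phi$ is differentiable and, passing to a subsequence, assume $D\phi(x_{k}) \to A \in K$. The ordinary chain rule at each such $x_{k}$ yields
\[
Dg(x_{k}) = Df(\phi(x_{k}))\,D\phi(x_{k}).
\]
Letting $k \to \infty$ and using smoothness of $f$ and $g$ together with $\phi(x_{k}) \to 0$ gives $Dg(0) = Df(0) \cdot A$; since $A$ is invertible, $\operatorname{rank}(Dg(0)) = \operatorname{rank}(Df(0))$.

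For assertion~(2), the equality of multiplicities comes from a size estimate: if $m = m_{f}$, then $|f(y)| \le K\nm y \nm^{m}$ near $0$, so $|g(x)| = |f(\phi(x))| \le KC^{m}\nm x \nm^{m}$, and this bound forces the smooth germ $g$ to have vanishing Taylor coefficients up to order $m-1$; thus $m_{g} \ge m_{f}$, and symmetry gives $m_{g} = m_{f} =: m$. For the bi-Lipschitz equivalence of the initial forms, I rescale: for $t > 0$ define $\phi_{t}(x) = \phi(tx)/t$. Each $\phi_{t}$ is bi-Lipschitz with the same constants $c, C$, and the family is equi-Lipschitz and locally uniformly bounded, so by Arzel\`a-Ascoli there exists $t_{k} \to 0^{+}$ with $\phi_{t_{k}} \to \Phi$ uniformly on compacts, with $\Phi$ again bi-Lipschitz of constants $c, C$. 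Dividing the identity $g(tx) = f(t\phi_{t}(x))$ by $t^{m}$ and invoking the Taylor expansions of $f$ and $g$ together with the homogeneity of $H_{f}$ and $H_{g}$, the left side converges to $H_{g}(x)$ while the right side converges to $H_{f}(\Phi(x))$ as $t = t_{k} \to 0^{+}$; hence $H_{g} = H_{f} \circ \Phi$. The equivalence of the singular sets $\Sigma(H_{f})$ and $\Sigma(H_{g})$ then follows from assertion~(1) applied germ-by-germ: at every $x_{0}$, the identity $H_{g} = H_{f} \circ \Phi$ is a bi-Lipschitz equivalence of the smooth germs at $x_{0}$ and $\Phi(x_{0})$, so the rank of the derivative drops at $x_{0}$ if and only if it drops at $\Phi(x_{0})$; hence $\Phi$ restricts to a bi-Lipschitz bijection $\Sigma(H_{g}) \to \Sigma(H_{f})$.

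The main obstacle I expect is the limit step in the rescaling argument: one must verify that after dividing $g(tx) = f(t\phi_{t}(x))$ by $t^{m}$ and letting $t = t_{k} \to 0^{+}$, the remainder terms in the Taylor expansions of $f$ and $g$ indeed vanish uniformly on compacts. This requires the estimate $\nm \phi_{t}(x) \nm \le C \nm x \nm$ to control $\nm t \phi_{t}(x) \nm$ and combining it with the uniform convergence $\phi_{t_{k}} \to \Phi$ and the continuity of $H_{f}$ on bounded sets. Assertion~(1), by contrast, is a clean consequence of the chain-rule identity along a Rademacher-good subsequence.
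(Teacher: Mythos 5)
Your proof is correct and follows essentially the same route as the sources the paper cites for this theorem (\cite{sara}, \cite{trot}) --- the paper itself does not reprove it --- namely Rademacher plus the blow-up/rescaling $\phi_{t}(x)=\phi(tx)/t$ of the bi-Lipschitz homeomorphism, which is also the mechanism used in the paper's proof of Theorem \ref{splitt2}. The only detail worth spelling out is that, to know the limit map $\Phi$ is a bi-Lipschitz \emph{homeomorphism} (not merely an injective Lipschitz map with the lower bound), you should rescale $\phi^{-1}$ simultaneously and pass to a common subsequence so the two limits are mutually inverse, exactly as in the construction of $d\phi$ and $d\psi$ there.
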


An alternative proof for the first assertion of the second item can be seen in \cite[Lemma 5.12]{ruas2}. We recall the definition of the tangent cone of an analytic set to introduce the invariance theorem. This notion generalizes the tangent space at a smooth point of a real or complex variety.

    Let $X \subset \re^{n}$ be an analytic set and $x_{0} \in \overline{X}$. A vector $v \in \re^{n}$ is a \textit{tangent vector} of $X$ at $x_{0}$ if there exists a sequence $(x_{i}) \subset X\setminus \{x_{0}\}$ and a sequence of positive real numbers $(t_{i})$ such that $x_{i} \longmapsto x_{0}$ and $v = \lim_{i \to \infty} \frac{1}{t_{i}} (x_{i} - x_{0})$.
\begin{Def}[Definition 2.1, \cite{sam}]
    The tangent cone of $X$ at $x_{0}$ is the set
    $$C(X,x_{0}) = \left\{ v \in \re^{n} : v \;\,\text{is a tangent vector of} \;\, X\, \text{at} \;\, x_{0} \in \overline{X} \right\}.$$
\end{Def}
We usually refer to $C(X,x_{0})$ as the \textit{geometric tangent cone} of $X$ at $x_{0}$. If $X$ is a smooth manifold in a neighborhood of $x_{0}$, it is clear that the tangent cone coincides with the tangent space. There is also an alternative description in terms of velocity of arcs in the set as follows (see \cite[Remark 2.2]{sam}): the tangent cone is the set of vectors $v \in \re^{n}$ for which there exists a continuous semialgebraic curve $\alpha: [0,\epsilon) \longrightarrow \re^{n}$ such that $\alpha(0) = x_{0}$, $\alpha(0,\epsilon) \subset X$, and $\alpha(t) - x_{0} = tv + o(t)$, where $o(t)$ denotes higher degree terms. A second characterization of tangent vectors is the following.

\begin{Lem}[\cite{vico}, Lemma 2.3]\label{chale}
    Let $X$ be a real analytic variety and $v \in \re^{n}$ a non-zero vector. Then $v \in C(X,0)$ if and only if there exists a sequence $(t_{k}) \subset \re$ of positive integers such that $\lim_{k \to \infty}t_{k} = 0$ and
    \begin{align*}
        \lim_{k \to \infty}\frac{d(t_{k}v,X)}{t_{k}} = 0,
    \end{align*}
    where $d(t_{k}v,X)$ is the usual Euclidean distance from the point $t_{k}v \in \re^{n}$ to the analytic set $X$.
\end{Lem}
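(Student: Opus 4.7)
The plan is to prove the two implications separately by translating directly between the definition of a tangent vector (as a limit of normalized secants $x_i/t_i$ with $x_i \in X$) and the distance-based criterion.

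For the forward direction, assume $v \in C(X,0)$ with $v \neq 0$. By definition there exist $(x_i) \subset X \setminus \{0\}$ and positive reals $(t_i)$ with $x_i \to 0$ and $x_i/t_i \to v$. I would first observe that $t_i \to 0$: this follows because $\|x_i\|/t_i \to \|v\| > 0$ together with $x_i \to 0$ forces $t_i \to 0$. Since $x_i \in X$, the trivial inequality
\[
    \frac{d(t_i v, X)}{t_i} \;\le\; \frac{\|t_i v - x_i\|}{t_i} \;=\; \left\|v - \frac{x_i}{t_i}\right\|
\]
converges to $0$, giving the distance condition with the same sequence $(t_i)$.

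For the backward direction, suppose $(t_k)$ is a sequence of positive reals with $t_k \to 0$ and $d(t_k v, X)/t_k \to 0$. For each $k$, pick $x_k \in X$ with $\|x_k - t_k v\| \le d(t_k v, X) + \epsilon_k t_k$, where $\epsilon_k > 0$ is chosen with $\epsilon_k \to 0$ (one can even take $x_k$ attaining the distance since $X$ is closed, but the approximate choice is enough and avoids any assumption). Then $\|x_k - t_k v\|/t_k \to 0$, so $x_k/t_k \to v$, and moreover $x_k = (x_k - t_k v) + t_k v \to 0$. Because $v \neq 0$, one has $x_k/t_k \to v \neq 0$, so $x_k \neq 0$ for all sufficiently large $k$. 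Passing to this tail of the sequence, the pair $(x_k, t_k)$ witnesses $v \in C(X,0)$ by the definition.

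I do not expect any serious obstacle here: the lemma is essentially a reformulation of the definition, and the only points requiring a small argument are the conclusions $t_i \to 0$ in the forward direction and $x_k \neq 0$ eventually in the backward direction, both of which rely crucially on the hypothesis $v \neq 0$. Everything else is an application of the triangle inequality and the fact that analytic sets are closed, so that a near-minimizer of the distance exists.
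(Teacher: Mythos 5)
Your argument is correct and complete: the forward direction via the triangle inequality with the witnessing sequence $(x_i)$, and the backward direction via near-minimizers of the distance, with the two small points ($t_i \to 0$ and $x_k \neq 0$ eventually) both correctly reduced to the hypothesis $v \neq 0$. Note that the paper itself does not prove this lemma --- it is quoted from the reference --- so there is no in-paper argument to compare against; your proof is the standard direct translation between the secant definition and the distance criterion, and it is exactly what is needed. One minor point: the statement says ``sequence $(t_{k}) \subset \re$ of positive integers,'' which is evidently a typo for positive real numbers (a sequence of positive integers cannot tend to $0$); you silently and correctly adopted the intended reading.
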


Let $\mathcal{O}_{n}$ be the ring of analytic function germs at the origin in $\re^{n}$. Let $I(X) \subset \mathcal{O}_{n}$ be the ideal generated by the germs of analytic functions vanishing at $X$. Given $f \in I(X)$, let $H_{f}$ denote the lowest degree homogeneous polynomial of $f$ and $I_{0}$ the ideal generated by $H_{f}$ for all $f \in I(X)$. The \textit{algebraic tangent cone} of $X$ at $0$ is
$$C_{a}(X,0) = \{ x \in \re^{n} : H_{f}(x) = 0 \; \forall \; f \in I(X)\}.$$
A classical result due to Whitney states that the algebraic tangent cone and the tangent cone coincide in the complex case. In general, we have the following properties:
\begin{Prop}[Remark 2.2, \cite{vico}]\label{rem0}
    Let $X, Y \subset \re^{n}$ be analytic sets. The following inclusions hold:
    \begin{enumerate}
        \item $C(X\cap Y,0) \subset C(X,0) \cap C(Y,0)$.
        \item $C_{a}(X\cap Y,0) \subset C_{a}(X,0) \cap C_{a}(Y,0)$.
        \item $C(X,0) \subset C_{a}(X,0)$.
    \end{enumerate}
\end{Prop}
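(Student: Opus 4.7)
The plan is to establish the three inclusions essentially directly from the definitions, with only (3) requiring a short estimate based on the multiplicity.

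For (1), I would invoke the arc-sequence definition of the geometric tangent cone. If $v \in C(X \cap Y, 0)$ is witnessed by sequences $(x_{i}) \subset (X \cap Y) \setminus \{0\}$ and $(t_{i}) \subset \re_{>0}$ with $x_{i} \to 0$ and $x_{i}/t_{i} \to v$, then the same pair of sequences witnesses $v \in C(X, 0)$, viewing $(x_{i})$ as a sequence in $X$, and equally $v \in C(Y, 0)$. The inclusion is immediate.

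For (2), I would work at the level of defining ideals. From $X \cap Y \subset X$ one has $I(X) \subset I(X \cap Y)$, and likewise $I(Y) \subset I(X \cap Y)$. Consequently, every initial form coming from an element of $I(X)$ is also an initial form coming from an element of $I(X \cap Y)$, so the family of homogeneous polynomials cutting out $C_{a}(X \cap Y, 0)$ contains the corresponding family for $C_{a}(X, 0)$. Passing to common vanishing loci reverses the containment, producing $C_{a}(X \cap Y, 0) \subset C_{a}(X, 0)$ and, symmetrically, $C_{a}(X \cap Y, 0) \subset C_{a}(Y, 0)$.

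For (3), the substantive step is a short estimate. Given $v \in C(X, 0)$ with witnesses $(x_{i}) \subset X \setminus \{0\}$ and $(t_{i})$, I would dispose of the trivial case $v = 0$ and then assume $v \neq 0$. Fixing a nonzero $f \in I(X)$ with initial form $f_{0}$ of degree $m = m_{f}$ and remainder $R$ satisfying $|R(x)| \le C \Vert x \Vert^{m+1}$ near the origin, the identity $f(x_{i}) = 0$ combined with the homogeneity of $f_{0}$ yields
\begin{equation*}
|f_{0}(x_{i}/t_{i})| \; = \; \frac{|f_{0}(x_{i})|}{t_{i}^{m}} \; = \; \frac{|R(x_{i})|}{t_{i}^{m}} \; \le \; C \left( \frac{\Vert x_{i} \Vert}{t_{i}} \right)^{m} \Vert x_{i} \Vert.
\end{equation*}
As $i \to \infty$, the right-hand side tends to $\Vert v \Vert^{m} \cdot 0 = 0$ while the left-hand side tends to $|f_{0}(v)|$ by continuity, forcing $f_{0}(v) = 0$. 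Since $f \in I(X)$ was arbitrary, $v \in C_{a}(X, 0)$.

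I do not anticipate any real obstacle. The only points requiring care are the orientation of the containment in (2), which is forward at the level of ideals but reversed when passing to vanishing loci, and the bookkeeping of the exponents of $t_{i}$ in (3), for which the homogeneity of $f_{0}$ does all the work once one divides by $t_{i}^{m}$.
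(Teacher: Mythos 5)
Your proposal is correct in all three parts. Note, however, that the paper gives no proof of this proposition at all: it is imported verbatim as Remark 2.2 of the cited reference \cite{vico}, so there is no in-text argument to compare yours against. Your treatment is the standard self-contained one and fills that gap cleanly: in (1) the witnessing sequence $(x_{i})\subset (X\cap Y)\setminus\{0\}$ works verbatim as a sequence in $X$ and in $Y$; in (2) the inclusion $I(X)\subset I(X\cap Y)$ (and likewise for $Y$) makes the family of initial forms defining $C_{a}(X\cap Y,0)$ larger, and passing to vanishing loci reverses the containment, exactly as you say; and in (3) the estimate $|f_{0}(x_{i}/t_{i})|\le C(\Vert x_{i}\Vert/t_{i})^{m}\Vert x_{i}\Vert$, obtained from $f(x_{i})=0$, the homogeneity of $f_{0}$, and the bound $|R(x)|\le C\Vert x\Vert^{m+1}$ valid near $0$ (where the $x_{i}$ eventually lie), forces $f_{0}(v)=0$ for every $f\in I(X)$. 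The only cosmetic remark is in the trivial case of (3): since every $f\in I(X)$ vanishes at the origin, each initial form $f_{0}$ has degree at least one, so $f_{0}(0)=0$ and $0\in C_{a}(X,0)$ automatically; this is exactly the detail you set aside and it causes no difficulty.
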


\begin{Rem}\label{rem1}
        \normalfont
        Let $X = V(I) \subset \re^{n}$ be an analytic set defined by an ideal $I \subset \mathcal{O}_{n}$. If $I(X) = \{ g \in \mathcal{O}_{n}: g(x) = 0 \; \forall \; x \in X\}$ and $I_{0}(X)$ and $I_{0}$ are the ideals generated by all the forms of elements in $I(X)$ and $I$, respectively, it holds that $I_{0} \subset I_{0}(X)$ and thus $V(I_{0}(X)) = C_{a}(X,0) \subset V(I_{0})$.
    \end{Rem}

    The main result in \cite{sam} is the invariance of the tangent cone under bi-Lipschitz homeomorphisms.
        \begin{The}[Theorem 3.2, \cite{sam}]\label{tsam}
            Let $X, Y \subset \re^{n}$ be subanalytic sets. If the germs $(X,x_{0})$ and $(Y,y_{0})$ are outer bi-Lipschitz equivalent, then the tangent cones $(C(X,x_{0}),x_{0})$ and $(C(Y,y_{0}), y_{0})$ are also outer bi-Lipschitz equivalent.
        \end{The}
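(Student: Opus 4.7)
The plan is to realize the desired bi-Lipschitz equivalence between the tangent cones as a uniform-on-compacta limit of rescalings of the given bi-Lipschitz homeomorphism. Translating if necessary, assume $x_{0}=y_{0}=0$, and let $\phi:(X,0)\longrightarrow(Y,0)$ be $L$-bi-Lipschitz, i.e.\ $L^{-1}\nm x-x'\nm \le \nm \phi(x)-\phi(x')\nm \le L\nm x-x'\nm$. For every $t>0$, define the rescaled map $\phi_{t}:t^{-1}X\longrightarrow t^{-1}Y$ by $\phi_{t}(v)=t^{-1}\phi(tv)$, which is again $L$-bi-Lipschitz and fixes the origin. By Kirszbraun's extension theorem, extend each $\phi_{t}$ and each $\phi_{t}^{-1}$ to $L$-Lipschitz maps $\tilde{\phi}_{t},\tilde{\psi}_{t}:\re^{n}\longrightarrow \re^{n}$, so that both families are equi-Lipschitz and vanish at $0$.

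Given a sequence $t_{k}\to 0^{+}$, the Arzel\`a--Ascoli theorem (via a diagonal argument over balls of arbitrarily large radius) extracts a subsequence along which $\tilde{\phi}_{t_{k}}$ and $\tilde{\psi}_{t_{k}}$ converge uniformly on compact subsets of $\re^{n}$ to $L$-Lipschitz maps $\tilde{\Phi}$ and $\tilde{\Psi}$. I would then show that $\tilde{\Phi}$ restricts to a map $\Phi:C(X,0)\longrightarrow C(Y,0)$. Indeed, for $v\in C(X,0)$, Lemma \ref{chale} yields $x_{k}\in X$ with $v_{k}:=t_{k}^{-1}x_{k}\to v$; setting $w_{k}:=\phi_{t_{k}}(v_{k})=t_{k}^{-1}\phi(x_{k})$, the point $t_{k}w_{k}=\phi(x_{k})$ lies in $Y$, and by uniform convergence $w_{k}\to \tilde{\Phi}(v)$. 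Hence $d(t_{k}\tilde{\Phi}(v),Y)/t_{k}\le \nm \tilde{\Phi}(v)-w_{k}\nm \to 0$, so $\tilde{\Phi}(v)\in C(Y,0)$ by Lemma \ref{chale} again. The symmetric argument shows $\tilde{\Psi}(C(Y,0))\subset C(X,0)$.

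To finish, passing to the limit in the inequalities $L^{-1}\nm v_{k}-v_{k}'\nm \le \nm \phi_{t_{k}}(v_{k})-\phi_{t_{k}}(v_{k}')\nm \le L\nm v_{k}-v_{k}'\nm$ along sequences approximating two points of $C(X,0)$ yields that $\Phi$ is $L$-bi-Lipschitz, while the identities $\tilde{\phi}_{t_{k}}\circ \tilde{\psi}_{t_{k}}=\mathrm{id}$ on $t_{k}^{-1}Y$ and its reverse pass to the limit to give $\Psi=\Phi^{-1}$, producing the outer bi-Lipschitz equivalence of the tangent cones.

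The main obstacle is verifying that pointwise limits of $\phi_{t_{k}}$ along different approximating sequences in $t_{k}^{-1}X$ give the same value, so that $\Phi$ is well-defined on the whole cone and bijects onto $C(Y,0)$. This compatibility reduces to the Hausdorff convergence of $t^{-1}X\cap \overline{B(0,R)}$ to $C(X,0)\cap \overline{B(0,R)}$ as $t\to 0^{+}$, which is where the subanalyticity hypothesis enters essentially: it provides the uniform control on compact sets that makes the diagonal Arzel\`a--Ascoli extraction yield a single, globally defined limit map, rather than one depending on the choice of approximating arcs.
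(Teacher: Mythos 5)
The paper does not actually prove this statement: it is quoted as Theorem 3.2 of \cite{sam} (Sampaio), so there is no internal proof to compare against. Your proposal reconstructs, in essence, the known argument of \cite{sam}: rescale $\phi$, extend by Kirszbraun, extract a blow-up limit by Arzel\`a--Ascoli (the same device the present paper invokes for the sequences $\phi_{m}$ in the proof of Theorem \ref{splitt2}), and show that the limit map carries one tangent cone onto the other with the same Lipschitz constant. That outline is sound, and the bookkeeping for $\Psi=\Phi^{-1}$ works as you describe because the extensions agree with $\phi_{t_{k}}$, $\psi_{t_{k}}$ on $t_{k}^{-1}X$ and $t_{k}^{-1}Y$.

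The one genuine soft spot is the step ``Lemma \ref{chale} yields $x_{k}\in X$ with $t_{k}^{-1}x_{k}\to v$''. Lemma \ref{chale} (and the sequence definition of the tangent cone) only provides \emph{some} sequence of scales along which $d(tv,X)/t\to 0$, whereas your $t_{k}$ were already fixed by the diagonal extraction; you need approximation of $v$ at \emph{every} scale, i.e.\ $\lim_{t\to 0^{+}} d(tv,X)/t=0$, and the same issue reappears when you pass the identity $\tilde{\phi}_{t_{k}}\circ\tilde{\psi}_{t_{k}}=\mathrm{id}$ on $t_{k}^{-1}Y$ to the limit on $C(Y,0)$. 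This is precisely where subanalyticity is needed, and in your closing paragraph it is asserted rather than proved; moreover the role you assign to it is off: Arzel\`a--Ascoli and the global definition of $\tilde{\Phi}$ require no subanalyticity (equi-Lipschitz extensions suffice, and well-definedness is automatic since $\tilde{\Phi}$ is an honest limit map on $\re^{n}$), while what subanalyticity actually buys is the scale-independence of the approximation. Two standard patches close the gap: use the arc description quoted just before Lemma \ref{chale} (for subanalytic $X$, every $v\in C(X,0)$ is the velocity of a curve $\alpha(t)=tv+o(t)$ inside $X$, which supplies $x_{k}=\alpha(t_{k})$ at your fixed scales), or observe that $t\mapsto d(tv,X)$ is a subanalytic function of one variable, hence has a well-defined order at $0$, so $\liminf_{t\to 0^{+}}d(tv,X)/t=0$ forces $\lim_{t\to 0^{+}}d(tv,X)/t=0$. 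With either fix your argument is complete and coincides in substance with Sampaio's proof.
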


\subsection{Semialgebraic surfaces}

   We recall in this section the inner bi-Lipschitz classification of semialgebraic surfaces $X \subset \re^{n}$ with an isolated singularity. We refer the reader to \cite{levsu} and \cite{levwei} for details. They are associated with the so-called $\beta$-horns as follows.
   
     \begin{Def}
            The $\beta$-horn is the semialgebraic set defined by
            $$H_{\beta} = \left\{ (x,y,z) \in \re^{3} : \left(x^{2}+y^{2}\right) = z^{2\beta}, z\ge 0\right\},$$
            where $\beta \ge 1$ is a rational number.
        \end{Def}
    If $\beta = 1$ the horn $H_{1}$ is called the \textit{metric cone}. Moreover, $H_{\beta}$ is inner equivalent to $H_{\beta'}$ if and only if $\beta = \beta'$. In \cite{levsu}, L. Birbrair gives the following classification.

    \begin{The}\label{inde1}
        Let $(X,0) \subset \re^{n}$ be a semialgebraic surface germ with an isolated singularity at the origin and connected link. Then $(X,0)$ is inner bi-Lipschitz equivalent to a $\beta$-horn $H_{\beta}$ for some $\beta \ge 1$.
    \end{The}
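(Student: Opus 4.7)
The plan is to extract a single inner-metric invariant $\beta \ge 1$ from $X$ using semialgebraic arcs, and then to build an explicit inner bi-Lipschitz parametrization of $X$ by $H_\beta$.

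I would first set up the topological backbone. Since $X$ is semialgebraic with an isolated singularity at $0$, the semialgebraic local conic structure theorem says that for $\epsilon>0$ sufficiently small, the pair $(B_\epsilon,X\cap B_\epsilon)$ is semialgebraically homeomorphic to the cone on the link $L=X\cap S_\epsilon^{n-1}$. The hypothesis that $L$ is connected together with $\dim L=1$ forces $L$ to be homeomorphic to $S^1$, so $X$ is topologically a disk. This fixes the combinatorial picture onto which the metric invariant will be hung.

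Next, I would define the inner Hölder exponent on arcs. For two distinct injective semialgebraic arcs $\alpha_1,\alpha_2\colon[0,\epsilon)\longrightarrow X$ with $\alpha_i(0)=0$, parametrized by distance to the origin, the Curve Selection Lemma together with semialgebraic Puiseux expansions ensure that $d_i(\alpha_1(t),\alpha_2(t)) = c\, t^{\beta(\alpha_1,\alpha_2)} + o(t^{\beta(\alpha_1,\alpha_2)})$ for a well-defined rational $\beta(\alpha_1,\alpha_2)\ge 1$. Using Kurdyka's pancake decomposition I would chop $X$ into finitely many semialgebraic pieces on which the inner and outer metrics are bi-Lipschitz equivalent; each piece is then a Hölder triangle with its own exponent, and the pancakes are glued along semialgebraic edges. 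Only finitely many values of $\beta(\alpha_1,\alpha_2)$ occur as one varies the arcs, coming from the exponents of the individual pancakes and their gluings. The connectedness of the link lets me traverse the entire pancake graph along a path that closes up to a loop in $L\cong S^1$; propagating comparisons around this loop, the supremum of the inner exponents between neighboring arcs is forced to collapse to a single $\beta$, which is the desired invariant.

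Finally, with this $\beta$ in hand, I would construct the bi-Lipschitz map $\Phi\colon H_\beta\longrightarrow X$ explicitly. Choose a semialgebraic bi-Lipschitz identification of $L$ with $S^1$ and, for each $\theta\in S^1$, follow a canonical semialgebraic arc $\alpha_\theta$ in $X$ from $0$ to the corresponding point of $L$, reparametrized by distance to $0$. Setting $\Phi(z,\theta) = \alpha_\theta(|z|)$ with $\theta\in S^1$ and $z$ the base coordinate of $H_\beta$ (so that points at height $z$ have distance $\sim z^\beta$ from the cone axis) produces a semialgebraic homeomorphism onto $X$, and the estimate in the previous step guarantees that inner distances at height $t$ match on both sides up to a constant.

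The main obstacle is the middle step: showing that the single-exponent phenomenon really does follow from the connected-link hypothesis. When two Hölder triangles with distinct exponents $\beta_1<\beta_2$ are glued along a common arc, the behaviour of the inner metric on pairs of arcs straddling the edge is delicate and, a priori, one can imagine intermediate exponents appearing as one moves around the link. Ruling this out — so that only one global Hölder exponent $\beta$ remains as the inner bi-Lipschitz invariant of $X$ — is the heart of the argument and uses in an essential way both the connectedness of $L$ and the semialgebraic finiteness of the pancake decomposition.
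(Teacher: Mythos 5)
First, a point of reference: the paper gives no proof of this statement at all --- it is Birbrair's classification theorem, quoted from \cite{levsu} (with the exponent computed via \cite{levwei}, Theorem \ref{inde}) --- so your outline can only be measured against the argument in that literature, whose general route (conic structure, pancake decomposition into H\"older triangles, normalization around the circle link) you do follow. The problem is that the step you yourself flag as ``the heart of the argument'' is precisely the content of Birbrair's theorem, and you do not prove it. What is needed there are the normalization lemmas for H\"older complexes: (i) two H\"older triangles of exponents $\beta_{1}\le\beta_{2}$ glued along a common boundary arc are inner bi-Lipschitz equivalent to a single H\"older triangle of exponent $\min(\beta_{1},\beta_{2})$, and (ii) a circular H\"older complex obtained by gluing finitely many triangles around a loop is inner equivalent to $H_{\beta}$ with $\beta$ the \emph{minimum} of the exponents involved. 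Without these, asserting that ``the exponents collapse to a single $\beta$'' is just a restatement of the theorem. Two of your supporting claims are also off: consistent with Theorem \ref{inde}, the horn exponent is the \emph{infimum} of contact orders, so it is the minimum that survives, not ``the supremum of the inner exponents''; and it is false that only finitely many values $\beta(\alpha_{1},\alpha_{2})$ occur as the arcs vary (already in a plane in $\re^{2}$ two arcs can have contact of any rational order $\ge 1$) --- the finiteness that is actually available is of the exponents attached to the triangles of a fixed pancake decomposition.

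The final construction has a related gap. Defining $\Phi(z,\theta)=\alpha_{\theta}(|z|)$ presupposes a semialgebraic family of arcs sweeping out $X$, and, more seriously, the bi-Lipschitz property of $\Phi$ requires \emph{uniform} two-sided comparisons of inner distances at each height, whereas the pairwise expansions $d_{i}(\alpha_{\theta_{1}}(t),\alpha_{\theta_{2}}(t))=c\,t^{\beta(\theta_{1},\theta_{2})}+o(t^{\beta(\theta_{1},\theta_{2})})$ come with constants and exponents depending on the pair and do not by themselves yield such uniformity. In Birbrair's proof this uniform control is exactly what the normalization of the H\"older complex delivers, i.e.\ it is supplied by the step you left open. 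So the skeleton of your plan matches the known proof, but the essential lemmas that make it work are missing.
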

    
    We present now a useful result to compute the index $\beta$. First, recall that a \textit{semialgebraic arc} at the origin in $\re^{n}$ is the image of a non-constant semialgebraic map $\gamma : [0,\epsilon) \longrightarrow \re^{n}$ such that $\gamma(0) = 0$. Let $\gamma_{1},\gamma_{2}$ be two semialgebraic arcs. A semialgebraic function $f(t)$ has an expansion $f(t) = at^{p} + o(p)$, where $a \neq 0$ and $ p$ is rational. The order of contact $\lambda(\gamma_{1},\gamma_{2})$ is defined as the order of the expansion of the semialgebraic function $\rho(t) = \nm \gamma_{1}(t) - \gamma_{2}(t) \nm$. For details, we refer the reader to \cite[Lemma 2]{tibar}. In particular, let $\gamma_{i} = u_{i}t + o(t)$, with $u_{1}, u_{2}$ linearly independent vectors and $i=1,2$, then the order of contact is $1$.
    
    \begin{The}[Theorem 4.1, \cite{levwei}]\label{inde}
        Let $(X,0) \subset \re^{n}$ be a semialgebraic surface germ which is inner bi-Lipschitz equivalent to a germ of a $\beta$-horn. Then
        \begin{align*}
            \beta = \inf\left\{ \lambda(\gamma_{1},\gamma_{2}) : \gamma_{1},\gamma_{2} \; \text{are semialgebraic arcs in $X$ such that} \; \gamma_{1}(0) = \gamma_{2}(0) = 0\right\}.
        \end{align*}
    \end{The}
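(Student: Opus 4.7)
The plan is to verify the identity in two stages: first, compute $\inf\lambda$ directly on the model $H_{\beta}$; second, argue that this quantity depends only on the inner bi-Lipschitz class of $X$. For the first stage, I would parametrize semialgebraic arcs on $H_{\beta}$ using Puiseux series. After reparametrizing by the $z$-coordinate (which is comparable to $\nm \gamma(t)\nm$ since $\beta\ge 1$), every semialgebraic arc through the origin on $H_{\beta}$ can be written as
\begin{align*}
    \gamma(t) = (t^{\beta}\cos\theta(t),\; t^{\beta}\sin\theta(t),\; t),
\end{align*}
where $\theta(t)$ is a convergent Puiseux series. For two such arcs with distinct leading angles $\theta_{1}(0)\neq\theta_{2}(0)$, a direct expansion gives $\nm\gamma_{1}(t)-\gamma_{2}(t)\nm = c\, t^{\beta}+o(t^{\beta})$ with $c\neq 0$, so the order of contact is exactly $\beta$; for mutually tangent arcs ($\theta_{1}(0)=\theta_{2}(0)$), the order is strictly larger. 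Taking the infimum over all pairs of arcs on $H_{\beta}$ yields $\beta$.

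For the second stage, fix a semialgebraic inner bi-Lipschitz homeomorphism $\phi : X \longrightarrow H_{\beta}$, which by standard semialgebraic arguments may be chosen so as to send semialgebraic arcs to semialgebraic arcs and to preserve the inner metric up to multiplicative constants. The key technical point is to transfer invariance from the inner order of contact, which is manifestly preserved, to the outer order: on a semialgebraic surface with isolated singularity and connected link, the outer order of contact of two arcs is determined, in leading exponent, by the product of the inner order and the H\"older exponent of the surface, and this exponent equals $\beta$ on $H_{\beta}$ and is itself an inner bi-Lipschitz invariant. Applying this relation to $\phi$-preimages of the extremal arcs on $H_{\beta}$ gives $\inf\lambda_{X}\le\beta$, and the reverse inequality follows symmetrically via $\phi$.

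The main obstacle is this last transfer: outer distances are not preserved by inner bi-Lipschitz maps, so some structural input about semialgebraic surfaces is essential. To make it rigorous I would invoke Birbrair's horn (or pancake) decomposition, which produces a canonical stratification of semialgebraic surface germs into pieces on which the ratio of outer to inner distance satisfies a precise H\"older law whose exponent is an inner bi-Lipschitz invariant coinciding with the $\beta$ of the associated model horn. Once this structural fact is in hand, the computation of $\inf\lambda$ on $X$ reduces to the explicit one carried out on $H_{\beta}$, completing the proof.
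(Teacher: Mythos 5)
Note first that the paper contains no proof of this statement: Theorem \ref{inde} is imported verbatim from \cite{levwei} (Theorem 4.1 there), so your argument has to stand entirely on its own. Your first stage (the computation of the infimum on the model $H_{\beta}$) is essentially fine, with one caveat you should make explicit: the contact order $\lambda(\gamma_{1},\gamma_{2})$ is only meaningful here if the arcs are normalized, say parametrized by distance to the origin; otherwise the infimum in the statement degenerates (reparametrizing one arc makes the order as small as you like). Your reparametrization by the $z$-coordinate uses this normalization silently, and it is exactly what makes ``order $\ge \beta$ for every pair, $=\beta$ for arcs with distinct leading angles'' correct.

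The second stage has a genuine gap. The ``key technical point'' you invoke --- that the outer contact order is, in leading exponent, the \emph{product} of the inner contact order and a H\"older exponent of the surface equal to $\beta$ --- is false already on the model: for the extremal arcs $\gamma^{\pm}(t)=(\pm t^{\beta},0,t)$ on $H_{\beta}$ the inner contact order is $\beta$ (for $\beta\ge 1$ the shortest path at height $t$ goes around the circle of radius $t^{\beta}$) and the outer contact order is also $\beta$, whereas your product rule would give $\beta^{2}$. More seriously, an inner bi-Lipschitz homeomorphism gives no control whatsoever on outer distances, so asserting that the exponent governing the outer/inner comparison ``is itself an inner bi-Lipschitz invariant'' is not a usable lemma --- it is essentially the content of the theorem; and the pancake decomposition does not come equipped with a ``H\"older law of exponent $\beta$'': its content is that the finitely many pieces are normally embedded, i.e.\ inner and outer metrics are comparable (with constants, not with a new exponent) on each piece. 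A correct transfer needs three separate ingredients: (i) pairwise $\lambda_{\mathrm{inner}}\le\lambda_{\mathrm{outer}}$ (since $d_{o}\le d_{i}$), so the outer infimum is at least the inner one; (ii) normal embedding of the pancakes together with the uniform finiteness of the decomposition, to show that the outer and inner diameters of the levels $X\cap S_{t}$ have the same order, which produces a pair of semialgebraic arcs realizing the inner infimum also in the outer metric and gives the reverse inequality; (iii) the identification of the inner infimum with $\beta$, which requires the equivalence with $H_{\beta}$ (or at least the images of the chosen arcs) to be semialgebraic --- a point your ``standard semialgebraic arguments'' does not supply for a merely inner bi-Lipschitz $\phi$, and which must be quoted from the classification itself or from a semialgebraic approximation theorem. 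As written, Stage 2 assumes the inner-to-outer comparison that constitutes the theorem.
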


\subsection{Determinacy of weighted homogeneous germs}

In this section, we introduce finite determinacy results for the weighted homogeneous case.

\begin{Def}
    \hfill
    \begin{enumerate}
        \item Given $(r_{1}, \dots, r_{n}; d_{1}, \dots, d_{p})$, where $r_{i}, d_{i} \in \mathbb{Q}^{+}$, a map germ $f : (\re^{n},0) \longrightarrow (\re^{p},0)$ is weighted homogeneous of type $(r_{1}, \dots, r_{n}; d_{1}, \dots, d_{p})$ if for all $\lambda \in \re \setminus \{0\}$:
\begin{align*}
    f(\lambda^{r_{1}}x_{1}, \dots, \lambda^{r_{n}}x_{n}) = (\lambda^{d_{1}}f_{1}(x), \dots, \lambda^{d_{p}}f_{p}(x)).
\end{align*}
\item For each monomial $x^{\alpha} = x_{1}^{\alpha_{1}}\dots x_{n}^{\alpha_{n}}$ in $C(n,1)$ we define $\fl(x^{\alpha}) = \sum_{i=1}^{n}r_{i}\alpha_{i}$.
\item We define a filtration in the ring $C(n,1)$ via the following function:
\begin{align*}
    \fl(f) = \inf_{\alpha}\left\{ \fl(x^{\alpha}) : \left( \frac{\partial^{\alpha}f}{\partial x^{\alpha}}\right)(0) \neq 0 \right\}.
\end{align*}
\item For any map germ $f = (f_{1}, \dots, f_{p}) \in C(n,p)$ we call $\fl(f) = (d_{1}, \dots, d_{p})$, where $d_{i} = \fl(f_{i})$ for each $i=1, \dots, p$. 
    \end{enumerate}
\end{Def}                    

Based on a particular version in \cite{ruas5}, the next result is stated in \cite{saia}.

\begin{The}\label{def}
    Let $f : (\re^{n},0) \longrightarrow (\re^{p},0)$ be a weighted homogeneous map germ with an isolated singularity at the origin. Suppose that $f$ is of type $(r_{1}, \dots, r_{n} ; d_{1}, \dots, d_{p})$, where $r_{1} \le r_{2} \le \dots \le r_{n}$. Let $f_{t}(x) = f(x) + t\Theta(x,t)$ be a smooth deformation of $f$ with $\Theta = (\Theta_{1}, \dots, \Theta_{p}) : (\re^{n},0) \longrightarrow (\re^{p},0)$. The following statements hold:
    \begin{enumerate}
        \item If $\fl(\Theta_{i}) > d_{i} + r_{n} - r_{1}$, then $f_{t}$ admits a bi-Lipschitz trivialization along $I = [0,1]$.
        \item If $\fl(\Theta_{i}) = d_{i} + r_{n} - r_{1}$, then $f_{t}$ is bi-Lipschitz trivial for small $t$.
    \end{enumerate}
\end{The}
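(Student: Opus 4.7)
The plan is to adapt the Thom--Mather integration-of-vector-fields method to the bi-Lipschitz category, using a weighted-homogeneous Łojasiewicz-type estimate to control the regularity of the trivialization. Consider the suspension $F(x,t) = (f_{t}(x),t)$ and seek a time-dependent vector field $V = \partial/\partial t + \sum_{j} v_{j}(x,t)\,\partial/\partial x_{j}$ whose time-$t$ flow $\phi_{t}$ satisfies $f_{t}\circ\phi_{t} = f$ near the origin. Differentiating this identity in $t$ reduces the problem to solving the division equation
\begin{align*}
\sum_{j=1}^{n} v_{j}(x,t)\,\frac{\partial f_{t,i}}{\partial x_{j}}(x) \;=\; -\Theta_{i}(x,t), \qquad i = 1,\dots,p,
\end{align*}
with estimates strong enough to guarantee that the associated flow is bi-Lipschitz in $x$.

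Because $f$ is weighted homogeneous with isolated singularity, $df$ has maximal rank outside $0$, and its restriction to the weighted unit sphere $S = \{\rho(x)=1\}$, with $\rho(x) = \bigl(\sum_{j} x_{j}^{2M/r_{j}}\bigr)^{1/2M}$ for a suitable common multiple $M$, is a surjective submersion. By compactness of $S$ one obtains a smooth right inverse for $df$ on $S$; transporting the solution off $S$ by the weighted dilatation action yields, for each $i$, a continuous solution $v_{j}$ of the division equation whose weighted filtration satisfies $\fl(v_{j}) \ge \fl(\Theta_{i}) - d_{i} + r_{j}$. Since the isolated-singularity condition on $S$ is open under weighted-higher-order perturbation, the same estimate holds uniformly for the perturbed $f_{t}$ on a suitable interval of $t$. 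The decisive fact is that a smooth germ $h$ of weighted filtration at least $s$ satisfies $|\partial h/\partial x_{k}| \le C\,\rho(x)^{s - r_{k}}$, which when applied to $v_{j}$ yields
\begin{align*}
\left|\frac{\partial v_{j}}{\partial x_{k}}\right| \;\le\; C\,\rho(x)^{\fl(\Theta_{i}) - d_{i} + r_{j} - r_{k}}.
\end{align*}
The worst case of this exponent, obtained for the smallest $r_{j}$ and largest $r_{k}$, equals $\fl(\Theta_{i}) - d_{i} - (r_{n} - r_{1})$; it is nonnegative under hypothesis (2) and strictly positive under (1).

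In case (1), $\partial v/\partial x \to 0$ as $x \to 0$, so restricting to a sufficiently small germ neighborhood of $0$ makes the Lipschitz constant of $v(\cdot,t)$ arbitrarily small; Gronwall's inequality then furnishes a flow existing on the whole of $I=[0,1]$ which is bi-Lipschitz in $x$ uniformly in $t$, producing the desired trivialization. In case (2), $\partial v/\partial x$ is merely bounded and Gronwall provides a bi-Lipschitz flow only up to a time $t_{0}$ inversely proportional to that bound, whence bi-Lipschitz triviality for small $t$ only. The main technical difficulty is the sharp weighted-Łojasiewicz solution of the division equation: the Jacobian matrix has entries of inhomogeneous weighted degrees $d_{i} - r_{j}$, so a naive Cramer-type inversion does not respect the filtration. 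One must instead construct the right inverse via an equivariant trivialization of the submersion $df|_{S}$ and then transport it to a neighborhood of the origin by the weighted dilatation, so that the resulting $v$ genuinely achieves the filtration lower bound $\fl(\Theta_{i}) - d_{i} + r_{j}$. Once this estimate is in place, verifying that both $\phi_{t}$ and $\phi_{t}^{-1}$ are Lipschitz with symmetric Gronwall constants is routine.
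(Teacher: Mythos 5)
The paper itself offers no proof of Theorem \ref{def} (it is quoted from \cite{ruas5} and \cite{saia}), and your overall strategy --- solve the infinitesimal equation with weighted Łojasiewicz-type control and integrate the resulting vector field --- is indeed the standard route taken there. But there is a genuine gap in how you derive the dichotomy between (1) and (2). Your claim that in case (2) ``Gronwall provides a bi-Lipschitz flow only up to a time $t_{0}$ inversely proportional to that bound'' is false: if $v(\cdot,t)$ is Lipschitz in $x$ with a constant $L$ uniform over $t\in[0,1]$ and $v(0,t)=0$, then $|v(x,t)|\le L|x|$, so trajectories starting near $0$ stay in the chosen neighborhood and exist for all $t\in[0,1]$, and Gronwall gives bi-Lipschitz constants $e^{\pm L}$ for the time-one map. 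As written, your case-(2) argument would therefore prove trivialization along all of $I$, contradicting the sharpness example given right after the theorem ($f=x^{2}+y^{2}$, $\Theta=-x^{2}-y^{2}+x^{2k+1}+y^{2k+1}$, which satisfies the equality case but is not even topologically trivial at $t=1$). The real source of the small-$t$ restriction lies one step earlier, in the uniformity in $t$ of the lower bound that makes the division equation solvable with a controlled right inverse: writing $df_{t}=df+t\,d\Theta$ and using $|\partial\Theta_{i}/\partial x_{k}|\le C\rho(x)^{\fl(\Theta_{i})-r_{k}}$, the perturbation is absorbed by the bound for $df$ for all $t\in[0,1]$ on a sufficiently small neighborhood exactly when $\fl(\Theta_{i})-d_{i}-(r_{n}-r_{1})>0$, whereas under equality absorption forces $t\le c/C$ with constants independent of the neighborhood (in the example above the Lipschitz constant of the vector field blows up as $t\to1$). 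This is precisely the step you pass over with ``the isolated-singularity condition on $S$ is open under weighted-higher-order perturbation \dots uniformly \dots on a suitable interval of $t$'', and it is where the two cases actually separate.

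A secondary gap: you bound $\partial v_{j}/\partial x_{k}$ by declaring that $v_{j}$ has filtration at least $\fl(\Theta_{i})-d_{i}+r_{j}$ and then invoking the estimate valid for smooth germs of given filtration. But $v_{j}$, built from a (pseudo-)inverse of $df_{t}$, is smooth only away from the origin, so its ``filtration'' in the sense used is not defined, and a merely continuous $v$ with $|v|\le C\rho^{s}$ need not satisfy $|\partial v/\partial x_{k}|\le C\rho^{s-r_{k}}$; the derivative bounds must come directly from the explicit formula, i.e.\ from the scaling of the equivariant right inverse for the homogeneous $f$ together with the perturbative bounds on $t\,d\Theta$ in the admissible $t$-range. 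Two smaller points: for $p>1$ the same $v_{j}$ must solve all $p$ equations simultaneously, so the relevant exponent is $\min_{i}\bigl(\fl(\Theta_{i})-d_{i}\bigr)+r_{j}$ rather than a bound ``for each $i$''; and the right-hand side of the infinitesimal equation is $-\partial f_{t}/\partial t=-(\Theta+t\,\partial_{t}\Theta)$, not $-\Theta$ (harmless, since $\partial_{t}\Theta$ has the same filtration in $x$, but it should be noted). With the uniform lower bound for $df_{t}$ established on the correct range of $t$, the rest of your integration argument does yield both assertions of the theorem.
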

    The next example shows that the inequalities of the hypothesis above are sharp.
    \begin{Exam}
        \normalfont
        Consider the function germ $f(x,y) = x^{2} + y^{2}$ and its deformation
        \begin{align*}
            f_{t}(x,y) = f(x,y) + t(-x^{2} - y^{2} + x^{2k+1} + y^{2k+1}),
        \end{align*}
        where $t \in [0,1]$. Notice that $f_{1}^{-1}(0)$and $f^{-1}(0)$ have distinct topologies, hence the deformation is not topologically trivial along $[0,1]$.
     \end{Exam}

    \section{Topological equivalence}\label{topeq}

    We detail in this section the topological relation between the mixed polynomials $\fab$ and Pham-Brieskorn polynomials $\ga$ and some consequences. This allows us to derive conditions on the vector of exponents in the case of topological equivalence. In particular, whether $\fab$ is a topological submersion.

    \begin{Prop}[Theorem 4.1, \cite{ruas3}]\label{topfor}
        Let $b_{i} \ge 0$ for all $i = 1, \dots, n$. The following statements hold:
        \begin{enumerate}
            \item If $a_{i} \ge 1$ for all $i$, then $\fab$ is topologically equivalent to the holomorphic Pham-Brieskorn polynomial $\ga$.
            \item If $a_{i} = 0$ for all $i = 1, \dots, k$ and $a_{i} \ge 1$ for all $i = k+1, \dots, n$, then $\fab$ is topologically equivalent to $\fabt = \sum_{i=1}^{k}\nm z_{i}\nm^{2b_{i}} + \sum_{i=k+1}^{n}z_{i}^{a_{i}}$, where $1 \le k < n$ and $\tilde{\textbf{b}} = (b_{1}, \dots, b_{k}, 0, \dots, 0)$.
        \end{enumerate}
        We call $\ga$ and $\fabt$ the topological normal forms of $\fab$, respectively.
    \end{Prop}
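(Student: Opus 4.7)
The plan is to construct a radial homeomorphism $\phi \colon (\co^{n},0) \longrightarrow (\co^{n},0)$ of product form $\phi(z_{1},\dots,z_{n}) = (\phi_{1}(z_{1}),\dots,\phi_{n}(z_{n}))$, such that $\fab \circ \phi = \ga$. Because the mixed polynomial decouples into a sum of one-variable mixed monomials, one can reduce the problem to solving, for each $j$, the scalar identity $\phi_{j}(z_{j})^{a_{j}+b_{j}}\cj{\phi_{j}(z_{j})}^{b_{j}} = z_{j}^{a_{j}}$ (for part (1)) or the appropriate target monomial (for part (2)). The key observation driving everything is that $z_{j}^{a_{j}+b_{j}}\cj{z}_{j}^{b_{j}} = |z_{j}|^{2b_{j}} z_{j}^{a_{j}}$, so one only needs to absorb the modulus factor $|z_{j}|^{2b_{j}}$ into a holomorphic power via a radial rescaling.

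For part (1), where $a_{j}\ge 1$ for all $j$, I would set
\begin{align*}
\phi_{j}(z_{j}) = |z_{j}|^{s_{j}-1}z_{j}, \qquad s_{j} = \frac{a_{j}}{a_{j}+2b_{j}}.
\end{align*}
A direct computation gives $|\phi_{j}(z_{j})|^{2b_{j}}\phi_{j}(z_{j})^{a_{j}} = |z_{j}|^{2b_{j}s_{j}+(s_{j}-1)a_{j}} z_{j}^{a_{j}} = z_{j}^{a_{j}}$, the exponent vanishing precisely by the choice of $s_{j}$. Summing over $j$ yields $\fab \circ \phi = \ga$. Since $s_{j}\in(0,1]$, the inverse $\phi_{j}^{-1}(w_{j}) = |w_{j}|^{1/s_{j}-1}w_{j}$ is well-defined off the origin and extends continuously by $0 \mapsto 0$, so $\phi$ is a germ of homeomorphism of $\co^{n}$ at the origin.

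For part (2), the same idea applies coordinatewise but with a dichotomy dictated by the vanishing of $a_{j}$. For $j\in\{k+1,\dots,n\}$ with $a_{j}\ge 1$, the previous formula for $\phi_{j}$ is used, converting the $j$-th mixed monomial into $z_{j}^{a_{j}}$. For $j\in\{1,\dots,k\}$ with $a_{j}=0$, we have $z_{j}^{b_{j}}\cj{z}_{j}^{b_{j}} = \nm z_{j}\nm^{2b_{j}}$ already, which is exactly the target monomial appearing in the topological normal form; one simply takes $\phi_{j}=\mathrm{id}$. Assembling these components produces the desired $\phi$ and yields $\fab \circ \phi = \ga$ as stated.

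The only genuine subtlety lies in the behaviour of $\phi$ near the origin: one must confirm that the piecewise-defined radial rescaling is bi-continuous there, not merely off $0$. This reduces to noting that $|\phi_{j}(z_{j})| = |z_{j}|^{s_{j}}$ with $s_{j}>0$, from which $\phi_{j}$ and $\phi_{j}^{-1}$ are both continuous at $0$. Thus the construction of $\phi$ and the algebraic identity it satisfies are essentially the whole argument; the novelty in part (2) is recognizing that when $a_{j}=0$ one must alter the target (replacing the nonexistent $z_{j}^{0}$ by $\nm z_{j}\nm^{2b_{j}}$) rather than trying to invert a degenerate radial factor.
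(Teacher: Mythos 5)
Your proposal is correct and is essentially the paper's proof: the same radial, coordinatewise rescaling absorbing the factor $\nm z_{j}\nm^{2b_{j}}$, with the identity on coordinates where $a_{j}=0$ and continuity at the origin checked via the positivity of the radial exponent. The only difference is direction: your $\phi_{j}(z_{j}) = \nm z_{j}\nm^{s_{j}-1}z_{j}$ with $s_{j} = a_{j}/(a_{j}+2b_{j})$ is exactly the inverse of the map $z_{j} \mapsto z_{j}\nm z_{j}\nm^{2b_{j}/a_{j}}$ used in the paper, so you verify $\fab\circ\phi=\ga$ where the paper verifies $\ga\circ\phi=\fab$.
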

    \begin{proof}
    Let $D = \{ z_{1}\dots z_{n} = 0\}$ and define the map $\phi : \co^{n}\setminus D \longrightarrow \co^{n}\setminus D$ by $\phi(z) = w$, where $w_{i} = z_{i}\nm z_{i} \nm^{2b_{i}/a_{i}}$, for all $i = 1, \dots, k$, and $w_{i} = z_{i}$ if $a_{i} = 0$ for all $i = k+1, \dots, n$. Its inverse map is $\phi^{-1}(w) = z$, where
    $$ \begin{cases}
        z_{i} = w_{i} & \text{if} \;i = 1, \dots, k,\\
        z_{i} = \frac{w_{i}}{\nm w_{i} \nm^{\frac{2b_{i}}{a_{i}+2b_{i}}}} & \text{if}\; i = k+1, \dots, n, 
    \end{cases}
    $$
    Since $2b_{i} < a_{i} + 2b_{i}$ provided that $a_{i} \ge 1$, it holds that $\lim_{w_{i} \to 0}z_{i} = 0$. Whence $\phi$ extends to a homeomorphism of $\co^{n}$ by mapping $z_{i} = 0$ to $w_{i} = 0$ for each $i = 1, \dots, n$, and satisfies $\ga \circ \phi = \fab$, in the case $k = 0$, or $\fabt \circ \phi = \fab$ if $k \neq n$.
    \end{proof}
    However, note that on each coordinate subspace $\{z_{i} = 0\}$, $\phi$ is never a diffeomorphism. Recall that two germs of sets $X, Y \subset \re^{n}$ at the origin have the same \textit{topological type}, also called embedded topological type if there exists a germ of homeomorphism $\phi : (\re^{n},0) \longrightarrow (\re^{n},0)$ such that $\phi(X) = Y$.
    \begin{The}[Theorem, \cite{etsu}]\label{etsu}
        Let $g_{\textbf{a}},g_{\cbf} : (\co^{n},0) \longrightarrow (\co,0)$ be two Pham-Brieskorn polynomials, where $a_{i},c_{i} \ge 2$ for all $i = 1, \dots, n$. Then the hypersurface germs defined by $g_{\textbf{a}}$ and $g_{\cbf}$ have the same topological type if and only if $\abf = \cbf$.
    \end{The}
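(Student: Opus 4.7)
The ``if'' direction is trivial since equal exponent vectors give identical polynomials. For the converse I would extract $\abf$ from topological invariants of the Milnor fibration of $\ga$.

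First, I would observe that any germ of homeomorphism $\phi\colon(\co^n,0)\to(\co^n,0)$ sending $\ga^{-1}(0)$ onto $g_{\cbf}^{-1}(0)$ restricts, after choosing a sufficiently small sphere, to a homeomorphism of link pairs $(S_\epsilon^{2n-1},K_\abf)\cong (S_\epsilon^{2n-1},K_\cbf)$. Since Pham--Brieskorn polynomials are weighted homogeneous and satisfy the strong Milnor condition noted in the introduction, the Milnor fibration $\ga/\nm\ga\nm\colon S_\epsilon^{2n-1}\setminus K_\abf \to S^1$ is determined up to fiber-preserving topological equivalence by the link pair alone. Consequently, the characteristic polynomial $\Delta_\abf(t)$ of the geometric monodromy and the Milnor number $\mu_\abf$ are embedded topological invariants of $(\ga^{-1}(0),0)$, so the hypothesis forces $\Delta_\abf = \Delta_\cbf$ and $\mu_\abf = \mu_\cbf$.

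Next, I would appeal to Pham's classical formula
\begin{equation*}
    \Delta_\abf(t) = \prod_{k_1=1}^{a_1-1}\cdots\prod_{k_n=1}^{a_n-1}\Bigl(t - \exp\bigl(2\pi i(k_1/a_1+\cdots+k_n/a_n)\bigr)\Bigr), \qquad \mu_\abf = \prod_{i=1}^n(a_i-1),
\end{equation*}
which reduces the problem to a purely arithmetic reconstruction: recover the ordered vector $\abf$ from the cyclotomic factorization of $\Delta_\abf$ in $\mathbb{Z}[t]$. The largest integer $d$ with $\Phi_d(t)\mid\Delta_\abf(t)$ equals $\lcm(a_1,\ldots,a_n)$, which already constrains admissible $\abf$; combining this datum with the multiplicities of the factors $\Phi_d$ and with $\mu_\abf$, one should be able to peel off the exponents one at a time, with the monotonicity $a_1\le\cdots\le a_n$ removing any residual ambiguity.

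The main obstacle I expect is this final combinatorial step. The relations among the multiplicities of the $\Phi_d$-factors in $\Delta_\abf$ are intricate because distinct exponent vectors can contribute to the same cyclotomic factor, and the degeneracy that occurs when some $a_i=1$, where the $z_i$-factor collapses in $\Delta_\abf$, illustrates why the hypothesis $a_i,c_i\ge 2$ is essential. Overcoming this would require a careful induction on $n$ together with $\mu_\abf$ used to control the total degree, and this is the technical core of the argument in \cite{etsu}.
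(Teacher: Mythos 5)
First, a point of orientation: the paper does not prove this statement at all --- it is imported verbatim from Yoshinaga--Suzuki \cite{etsu} and used as a black box --- so there is no internal proof to compare yours against. Judged on its own merits, your outline follows the natural strategy: reduce the embedded topological type to the link pair via the conical structure theorem, note that the characteristic polynomial of the monodromy is an invariant of the embedded topological type (for $n=2$ this needs a word of care, since the link complement has $H_{1}\cong\mathbb{Z}^{r}$ and one must use the canonical total linking number epimorphism to define the infinite cyclic cover), invoke Pham's formula, and then recover $\abf$ arithmetically from $\Delta_{\abf}$.

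The genuine gap sits exactly where the theorem lives: the arithmetic reconstruction of $\abf$ from $\Delta_{\abf}$ is only asserted (``one should be able to peel off the exponents''), and the one concrete handle you offer is false. It is not true that the largest $d$ with $\Phi_{d}(t)\mid\Delta_{\abf}(t)$ equals $\lcm(a_{1},\dots,a_{n})$: for $\abf=(2,2)$ the only monodromy eigenvalue is $(-1)(-1)=1$, so $\Delta_{\abf}(t)=t-1$ and the largest such $d$ is $1$ while the lcm is $2$; for $\abf=(2,2,3)$ one gets $\Delta_{\abf}(t)=t^{2}+t+1$, so $d=3$ while the lcm is $6$. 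In the Milnor--Orlik divisor calculus this is the identity $(\Lambda_{2}-1)^{2}=1$: pairs of exponents equal to $2$ disappear from the top cyclotomic factor, so your induction anchor collapses and the ``peeling'' must be replaced by a genuinely careful argument. Whether the multiset $\{a_{i}\}$ with all $a_{i}\ge 2$ is determined by the product $\prod_{i}(\Lambda_{a_{i}}-1)$ is precisely the nontrivial content of the cited theorem, not a routine consequence of knowing $\mu_{\abf}$ and the cyclotomic factorization; as you yourself acknowledge, this is the technical core, so what you have is a plan rather than a proof. Your closing remark about the necessity of $a_{i},c_{i}\ge 2$ is correct (an exponent $1$ gives $\Lambda_{1}-1=0$, killing the Milnor fiber homology, and the statement indeed fails there), but it does not substitute for the missing combinatorial step.
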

   
    This result will also describe the topological properties of $\fab$. For instance, if $a_{1} = 1$, then $\fab$ is a topological submersion. We shall prove that the converse holds, even in the case $a_{i} = 0$ for some, but not all, indices $i$. For this purpose, we must return to the holomorphic context.

    The well-known Zariski conjecture concerns the invariance of the multiplicity of holomorphic function germs under a homeomorphism of their zero sets. In \cite{le}, Lê Dung Trang proved that the Betti numbers of the associated Milnor fiber are preserved under homeomorphisms. On the other hand, A'Campo proved in \cite{camp} that regularity at a point implies that the homology of the fiber is trivial (see \cite{eyr} for details). Since regularity means multiplicity equals one, the conjecture holds in this case:
    \begin{Lem}[Theorem 11.3, \cite{eyr}]\label{l0}
        Let $f,g : (\co^{n},0) \longrightarrow (\co,0)$ be reduced germs of holomorphic functions. If $f$ and $g$ are topologically equivalent and the multiplicity $m_{f} = 1$, then $m_{g} = 1$. In other words, the multiplicity is equal to one is preserved under homeomorphisms.
    \end{Lem}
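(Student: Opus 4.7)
The plan is to combine the two classical invariance results cited just before the statement: Lê Dung Trang's theorem on the Betti numbers of the Milnor fiber and A'Campo's theorem on the Lefschetz number of the monodromy. The strategy is to transfer the triviality of the Milnor fiber of $f$ to that of $g$ via the homeomorphism realizing the topological equivalence, and then to use this transferred triviality to rule out $m_{g} \ge 2$.

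Let $\phi : (\co^{n},0) \longrightarrow (\co^{n},0)$ be the germ of homeomorphism with $g = f \circ \phi^{-1}$; then $\phi(f^{-1}(0)) = g^{-1}(0)$, so the hypersurface germs $(f^{-1}(0),0)$ and $(g^{-1}(0),0)$ have the same embedded topological type. By Lê's theorem, the Betti numbers of the Milnor fiber of a reduced holomorphic germ depend only on the embedded topological type of its zero set; hence $b_{i}(F_{f}) = b_{i}(F_{g})$ for every $i$, where $F_{f}$ and $F_{g}$ denote the respective Milnor fibers. The hypothesis $m_{f} = 1$ means $df(0) \neq 0$, so the holomorphic implicit function theorem makes $f$ a local submersion near $0$ and $F_{f}$ is diffeomorphic to a ball; in particular it is contractible. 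Consequently $F_{g}$ has the reduced Betti numbers of a point.

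To conclude I would compute the Lefschetz number of the geometric monodromy $h_{g} : F_{g} \longrightarrow F_{g}$. Assuming $n \ge 2$ (the case $n = 1$ is immediate, since a reduced germ in one complex variable is, up to a unit, of the form $z$ and therefore has multiplicity one), the Milnor fiber $F_{g}$ is connected, $h_{g}$ acts as the identity on $H_{0}(F_{g}) = \mathbb{Z}$, and all higher reduced homology vanishes by the previous paragraph, so $\Lambda(h_{g}) = 1$. On the other hand, A'Campo's theorem gives $\Lambda(h_{g}) = 0$ whenever $m_{g} \ge 2$, which forces $m_{g} = 1$. The step I expect to require the most care is this last one, because A'Campo's original statement is phrased for isolated critical points whereas a priori $g$ could have a non-isolated critical locus; this is the point where one would appeal to the more general form recorded in the survey \cite{eyr}, or alternatively first use the acyclicity of $F_{g}$ to verify that $0$ is an isolated critical point of $g$ before invoking the original theorem.
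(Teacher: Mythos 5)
Your proposal is correct and follows essentially the same route the paper has in mind: the lemma is quoted from Theorem 11.3 of \cite{eyr}, and the paragraph preceding it sketches exactly your argument, namely Lê's topological invariance of the Betti numbers of the Milnor fiber (so acyclicity transfers from $F_{f}$ to $F_{g}$) combined with A'Campo's vanishing of the Lefschetz number of the monodromy at a singular point, which excludes $m_{g}\ge 2$. Your final caveat is harmless, since A'Campo's theorem holds for arbitrary (not necessarily isolated) critical loci, which is the form recorded in \cite{eyr}.
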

We obtain the following characterization of topological submersions.
\begin{Prop}\label{submfam}
Let $a_{i} \ge 0$ for all $i$ and $a_{j} \ge 1$ for some $j$. Then the mixed function $\fab$ is topologically equivalent to a submersion $(\re^{2n},0) \longrightarrow (\re^{2},0)$ if and only if $a_{i} = 1$ for some $i$.
\end{Prop}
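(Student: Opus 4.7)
My plan is to work with the topological normal form $\ga$ of Proposition \ref{topfor} and reduce both implications to the holomorphic setting: Lemma \ref{l0} handles the case in which all $a_{i}\ge 1$, while the case with some $a_{i}=0$ needs a direct Milnor-fibre comparison, since $\ga$ is no longer holomorphic.

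For the easy direction ($\Leftarrow$), suppose $a_{i}=1$ for some $i$. In either case of Proposition \ref{topfor}, the normal form $\ga$ contains a linear summand $z_{i}$ and no other term of $\ga$ depends on $z_{i}$. Then the smooth map $\Phi(z)=(z_{1},\dots,z_{i-1},\ga(z),z_{i+1},\dots,z_{n})$, viewed as a germ $(\re^{2n},0)\to(\re^{2n},0)$, has real Jacobian equal to the identity at the origin and is therefore a local diffeomorphism. Since $\ga=\pi_{i}\circ\Phi$ for the $i$-th coordinate projection $\pi_{i}$, the germ $\ga$ is smoothly, hence topologically, right-equivalent to a submersion; by Proposition \ref{topfor} so is $\fab$.

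For the converse ($\Rightarrow$), I first observe that any smooth submersion $(\re^{2n},0)\to(\re^{2},0)$ is, via the implicit function theorem, smoothly right-equivalent to the linear projection $\pi(z)=z_{1}$, a reduced holomorphic germ of multiplicity one. In case (1) of Proposition \ref{topfor}, $\ga=\sum_{i}z_{i}^{a_{i}}$ is reduced and holomorphic, and the chain of topological right-equivalences $\ga\sim_{\mathrm{top}}\fab\sim_{\mathrm{top}}\pi$ places us under the hypothesis of Lemma \ref{l0}, yielding $m_{\ga}=m_{\pi}=1$; since $m_{\ga}=a_{1}$, this forces $a_{1}=1$.

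In case (2), with $a_{1}=\dots=a_{k}=0$, $a_{j}\ge 1$ for $j>k$ and necessarily $b_{i}\ge 1$ for $i\le k$, the normal form $\ga=P(z_{\le k})+h(z')$ with $P=\sum_{i\le k}|z_{i}|^{2b_{i}}$ and $h=\sum_{j>k}z_{j}^{a_{j}}$ is not holomorphic, so Lemma \ref{l0} is not directly available. I would argue by contradiction, assuming $a_{j}\ge 2$ for every $j>k$, and compare the Milnor fibre of $\ga$ with that of the Pham--Brieskorn germ $h$. For a regular value $c\in\co\setminus[0,\infty)$ with $|c|\ll\epsilon$, the values $c-P(z_{\le k})$ all lie on the ray $c+(-\infty,0]$, a simply connected subset of $\co^{*}$; thus the map $H_{t}(z_{\le k},z')=((1-t)z_{\le k},z'_{t})$, where $z'_{t}$ is obtained by following continuously the regular value $c-P((1-t)z_{\le k})$ of $h$, retracts $F_{\ga}=\ga^{-1}(c)\cap B_{\epsilon}$ onto $\{0\}\times F_{h}$. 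For $n-k\ge 2$, Milnor's theorem gives $F_{h}\simeq\bigvee_{\mu}S^{n-k-1}$ with $\mu=\prod_{j>k}(a_{j}-1)\ge 1$; for $n-k=1$, $F_{h}$ is a discrete set of $a_{n}\ge 2$ points. Either way $F_{h}$, and hence $F_{\ga}$, is not contractible. However, topological right-equivalence of $\fab$ to a submersion forces the pair $(B_{\epsilon},\fab^{-1}(c))$ to be homeomorphic to $(V,\pi^{-1}(c))$ for a topological ball $V$ around $0$, so $F_{\ga}$ would be a topological disk, hence contractible --- a contradiction that yields some $a_{j}=1$ for $j>k$.

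The delicate step is the Milnor-fibre computation in case (2): one must choose $c$ and $\epsilon$ so that the homotopy above is well defined (no tangencies with $\partial B_{\epsilon}$ and no passage through $0\in\co$) and invoke the topological invariance of the Milnor fibre type within the class of weighted homogeneous mixed germs at hand. Everything else reduces to a direct diffeomorphism check or a single application of Lemma \ref{l0}.
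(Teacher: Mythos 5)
Your easy direction and your case (1) argument coincide with the paper's: the paper also passes to the normal form $\ga$ of Proposition \ref{topfor} and settles the case $a_{i}\ge 1$ for all $i$ by Lemma \ref{l0}. Where you genuinely diverge is case (2): you try to detect non-regularity through the homotopy type of the local Milnor fibre of the mixed germ $\ga$, whereas the paper never touches fibres there; it splits $\ga$ into real and imaginary parts, twists coordinates so that both $\Re(h)$ and $\Im(h)$ become (topological) submersions, deduces that $h^{-1}(0)$ is a topological manifold germ, and concludes regularity of the holomorphic germ $h$, hence $a_{k+1}=1$.

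The problem with your case (2) is the last step, which you do not flag: from ``$\fab$ is topologically right-equivalent to a submersion'' you conclude that $F_{\ga}=\ga^{-1}(c)\cap B_{\epsilon}$ is a topological disk because $(B_{\epsilon},\fab^{-1}(c))$ is homeomorphic to $(V,\pi^{-1}(c))$ with $V$ a topological ball. What the equivalence actually gives is $F_{\ga}\cong \pi^{-1}(c)\cap\phi^{-1}(B_{\epsilon})$, and the intersection of an affine plane with an arbitrary topologically embedded open ball need not be contractible, or even connected (wild embeddings), nor is the Milnor-type choice ``first $\epsilon$, then $c$'' respected by $\phi$. In other words, you are implicitly using an A'Campo--L\^e type statement --- topological invariance of the local fibre's homotopy type, and triviality of that fibre for germs topologically equivalent to submersions --- for a \emph{mixed}, non-holomorphic germ; Lemma \ref{l0} and the results of \cite{le}, \cite{camp} cited in the paper only cover holomorphic germs, and your closing remark about ``invariance within the class of weighted homogeneous mixed germs'' is not the statement you need (the comparison is with a submersion, outside that class). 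Until you either prove such an invariance for these mixed germs or, as the paper does, replace the fibre argument by an argument on zero sets (where topological right-equivalence genuinely transports the manifold structure and lets you fall back on the holomorphic summand $h$), case (2) is not established. The retraction of $F_{\ga}$ onto $\{0\}\times F_{h}$ also still needs the ball-control details you acknowledge, but that part looks repairable; the fibre-invariance step is the real gap.
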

    \begin{proof}
        Consider the topological normal forms of $\fab$. If $a_{i}=1$ for some $i$, then the Jacobian matrix of $\ga$ or $\fabt$ at $0$ contains a non-vanishing minor of order 2 associated with $z_{i}$ and thus it is a submersion. Conversely, in the case $a_{i} \ge 1$ for all $i$, the assertion is a consequence of Lemma \ref{l0}. If $a_{j} = 0$ for some subset of indices, we may suppose $\fabt = \sum_{i=1}^{k}\nm z_{i} \nm^{2b_{i}} + \sum_{i=k+1}^{n}z_{i}^{a_{i}}$, where $k < n$ and $a_{i} \ge 1$ for all $i = k+1, \dots, n$. In this case, since the function $\fabt$ restricted to the variables $z_{i}$ for $i \le k$ is real and singular at the origin for any coordinate system, the complex function $\sum_{i=k+1}^{n}z_{i}^{a_{i}}$ is a topological submersion. Therefore, the same lemma applies and we conclude that $a_{i} = 1$ for some $i=k+1, \dots, n$.
    \end{proof}  
    We can state the following.
    
    \begin{The}\label{cortop}
        Let $b_{i}, d_{i} \ge 0$ for all $i = 1, \dots, n$ and suppose that $\fab$ is topologically equivalent to $\fcd$. Then the following statements hold:
        \begin{enumerate}
            \item If $a_{i}, c_{i} \ge 2$, then $\abf = \cbf$.
            \item If $a_{i} = 0$ for all $i = 1, \dots, k$ and $a_{i} \ge 2$ for all $i = k+1, \dots, n$, then $c_{i} = 0$ for all $i=k+1, \dots, n$.
            \item If $a_{1} = 1$, then $c_{1} = 1$. 
        \end{enumerate}
    \end{The}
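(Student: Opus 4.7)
For item (1), both $\fab$ and $\fcd$ fall under case (1) of Proposition \ref{topfor} since $a_i, c_i \geq 2$, so $\fab \sim \ga$ and $\fcd \sim g_{\cbf}$ topologically, and by transitivity $\ga \sim g_{\cbf}$. A topological right equivalence of function germs produces a germ of ambient homeomorphism of $\co^n$ sending $\ga^{-1}(0)$ onto $g_{\cbf}^{-1}(0)$, so the two Pham-Brieskorn hypersurface germs share the same embedded topological type and Theorem \ref{etsu} gives $\abf = \cbf$. For item (3), the hypothesis $a_1 = 1$ together with Proposition \ref{submfam} shows $\fab$ is topologically equivalent to a submersion, hence so is $\fcd$, and a second application of Proposition \ref{submfam} forces $c_i = 1$ for some index $i$; the ordering of $\cbf$ together with a separate exclusion of $c_1 = 0$ then yields $c_1 = 1$.

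Item (2) is the main point. Since no $a_i$ equals $1$, Proposition \ref{submfam} shows $\fab$ is not topologically equivalent to a submersion, hence neither is $\fcd$, forcing $c_i \neq 1$ for every $i$ and therefore $c_i \in \{0\} \cup \{2, 3, \ldots\}$. Writing $k'$ for the number of zero entries of $\cbf$, I would prove $k = k'$ by exploiting the topologically invariant set $\Sigma(f)$ of points at which $f$ is not a local topological submersion. For the normal form $\ga = \sum_{i \le k} |z_i|^{2b_i} + \sum_{j > k} z_j^{a_j}$, any point $p$ of the real $2k$-dimensional subspace $\{z_j = 0 : j > k\}$ lies in $\Sigma(\ga)$, because near such a point the imaginary part of $\ga$ depends only on $(z_j)_{j > k}$ and vanishes to order at least two, while the real part is governed by the non-negative function $\sum |z_i|^{2b_i}$; consequently the fibers of $\ga$ over small real and over small purely imaginary perturbations of $\ga(p)$ have incompatible local topology. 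Thus $\dim \Sigma(\ga) \geq 2k$, the analogous argument gives $\dim \Sigma(\fcd) \geq 2k'$, and topological invariance of $\Sigma$ forces $k = k'$.

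It remains to deduce $a_j = c_j$ for $j > k$, and the plan is to recover the inner Pham-Brieskorn hypersurface $\{\sum_{j > k} z_j^{a_j} = 0\} \subset \co^{n-k}$ as a topological invariant of $\ga$ so that Theorem \ref{etsu} applies in dimension $n - k$. A natural candidate is the degeneration of the generic Milnor fiber: for $t \notin [0, \infty) \subset \co$ the map $\ga$ is a topological submersion over $t$ and $\ga^{-1}(t)$ projects trivially onto $\co^k$ with fiber a smooth level set of $\sum_{j>k} z_j^{a_j}$, so as $t$ approaches the critical image these fibers degenerate onto the singular Pham-Brieskorn hypersurface as a canonical stratum. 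Making this degeneration argument rigorous — that is, showing that the embedded type of $\{\sum z_j^{a_j} = 0\} \subset \co^{n-k}$ is determined by topologically invariant data of $\fab$ — is the main obstacle, since the topological equivalence need not respect the splitting $\co^n = \co^k \times \co^{n-k}$ and the extra $|z_i|^{2b_i}$ summands block a direct reduction to the classical holomorphic setting used in item (1).
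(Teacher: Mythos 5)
Your items (1) and (3) follow the paper's route (normal forms plus Theorem \ref{etsu}, resp.\ Proposition \ref{submfam}), but item (2) is where the content lies and your proposal does not prove it: the statement to be shown is $\abf=\cbf$, i.e.\ also $a_j=c_j$ for the indices $j>k$, and for this you offer only the Milnor-fibre degeneration heuristic that you yourself flag as the ``main obstacle.'' As you correctly observe, the topological equivalence need not respect the splitting $\co^{n}=\co^{k}\times\co^{n-k}$, so nothing in your sketch produces a germ of homeomorphism of $\co^{n-k}$ carrying $\{\sum_{j>k}z_j^{a_j}=0\}$ onto $\{\sum_{j>k}z_j^{c_j}=0\}$, which is exactly what Theorem \ref{etsu} needs; hence $a_j=c_j$ is never obtained. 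The paper closes this gap by working directly with the equivalence homeomorphism $\psi$ satisfying $\ga\circ\psi=\gc$: it first argues (using the rotation diffeomorphism $\phi$ from the proof of Proposition \ref{submfam}) that the holomorphic parts $h_{\abf}=\sum_{i>k}z_i^{a_i}$ and $h_{\cbf}=\sum_{i>l}z_i^{c_i}$ are topologically equivalent, deduces $l=k$ by comparing the codimensions of their zero sets, and then shows that for $(0,p)$ with $p\in\tilde{h}_{\abf}^{-1}(0)$ the image $\psi(0,p)=(q_1,q_2)$ satisfies $q_2\in\tilde{h}_{\cbf}^{-1}(0)$, so that $\gc(q_1,q_2)=0$ forces the nonnegative summand $\sum_{i\le k}\nm q_{1,i}\nm^{2d_i}$ to vanish and hence $q_1=0$; this exhibits $\psi(0,\cdot)$ as a homeomorphism taking $\tilde{h}_{\abf}^{-1}(0)$ to $\tilde{h}_{\cbf}^{-1}(0)$ inside $\co^{n-k}$, to which Theorem \ref{etsu} is applied. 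Some reduction of this kind is indispensable in your plan as well.

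Two secondary points. Your argument for $k=k'$ via the locus $\Sigma$ of points where the germ fails to be a local topological submersion is a genuinely different (and salvageable) route from the paper's codimension count, but as written it is logically incomplete: from $\dim\Sigma(\ga)\ge 2k$ and $\dim\Sigma(\gc)\ge 2k'$ alone you cannot conclude $k=k'$; you need the reverse inclusion $\Sigma(\ga)\subset\{z_{k+1}=\dots=z_n=0\}$, which is easy because off that subspace the holomorphic part $\sum_{j>k}z_j^{a_j}$ (with all $a_j\ge 2$) has nonvanishing gradient, so $\ga$ is there a genuine smooth submersion; with the resulting equalities $\dim\Sigma(\ga)=2k$, $\dim\Sigma(\gc)=2k'$ the invariance argument works. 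Finally, in item (3) you rightly note that $c_1=0$ must be excluded but give no argument: Proposition \ref{submfam} only yields $c_i=1$ for \emph{some} $i$, and indeed $\nm z_1\nm^{2d_1}+z_2+\dots$ is also a topological submersion, so ruling out $c_1=0$ requires invoking the standing positivity/ordering convention on the exponent vector (the paper's own proof is equally terse here); otherwise the conclusion should be stated as $c_i=1$ for some $i$.
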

    \begin{proof}
        The first assertion results from applying Theorem \ref{etsu} on the topological normal forms $\ga$ and $\gc$. In the second case, suppose that there exists a germ of homeomorphism $\psi : (\co^{n},0) \longrightarrow (\co^{n},0)$ such that $\fabt \circ \psi = \fdct$. We may suppose
        \begin{align*}
            \fabt &= \sum_{i=1}^{k}\nm z_{i} \nm^{2b_{i}} + \sum_{i=k+1}^{n}z_{i}^{a_{i}}, \\
            \fdct &= \sum_{i=1}^{l}\nm z_{i} \nm^{2d_{i}} + \sum_{i=l+1}^{n}z_{i}^{c_{i}}.
        \end{align*}
       Since the imaginary parts of $\fabt$ and $\fdct$ are topologically equivalent, the codimensions of the zero sets determined by $\Im(\fabt)$ and $\Im(\fdct)$ are the same, and thus $l = k$. The last case is a consequence of Proposition \ref{submfam}, since the topological normal forms are submersions.
    \end{proof}
    In the first case of the above theorem, if $\abf = \cbf$, it is clear that the pair is topologically equivalent. Moreover, if $a_{i} = 1$ for some $i$, then they are equivalent if and only if $c_{i} = 1$.

\section{Bi-Lipschitz equivalence}\label{lipeq}

To study the bi-Lipschitz equivalence of pairs of Pham-Brieskorn polynomials, we prove a result on the multiplicity of germs in separable variables. This is a weaker version of the Splitting Lemma for bi-Lipschitz equivalence of singularities.

  \begin{The}\label{splitt2}
    Let $F,G : (\fd^{2n},0) \longrightarrow (\fd,0)$ be a pair of smooth function germs of the following forms: 
        \begin{align*}
            &\begin{cases}
               F(x_{1}, y_{1}, \dots, x_{n},y_{n}) = f_{1}(x_{1},y_{1}) + \dots + f_{n}(x_{n},y_{n}),\\
               G(x_{1}, y_{1}, \dots, x_{n},y_{n}) = g_{1}(x_{1}, y_{1}) + \dots + g_{n}(x_{n},y_{n}).
            \end{cases} 
       \end{align*}
    Suppose that:
    \begin{enumerate}
        \item For each $i = 1, \dots, n$, $f_{i}(x_{i},y_{i}),g_{i}(x_{i},y_{i}) : (\fd^{2},0)\longrightarrow (\fd,0)$ are homogeneous polynomials and have isolated singularity at the origin.
        \item $2 \le m_{f_{1}} < m_{f_{2}} < \dots < m_{f_{n}}$.
        \item $2 \le m_{g_{1}} < m_{g_{2}} < \dots < m_{g_{n}}$.
    \end{enumerate}
    If $F$ and $G$ are bi-Lipschitz equivalent, then $m_{f_{i}} = m_{g_{i}}$ for all $i = 1, \dots, n$.
\end{The}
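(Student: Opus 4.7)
The plan is to proceed by induction on $n$. The base case $n = 1$ is immediate: $F = f_1$ and $G = g_1$ are bi-Lipschitz equivalent homogeneous polynomials with isolated singularity, so Theorem \ref{inva}(2) gives $m_{f_1} = m_{g_1}$.

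For the inductive step, assume the statement for sums of $n - 1$ blocks. Because $f_2 + \cdots + f_n$ has order at least $m_{f_2} > m_{f_1}$ (and similarly for $G$), the lowest-degree homogeneous parts of $F$ and $G$ are $H_F = f_1$ and $H_G = g_1$, trivially extended to $\fd^{2n}$. Applying Theorem \ref{inva}(2) to $F \sim G$ gives $m_F = m_G$, whence $m_{f_1} = m_{g_1}$, together with bi-Lipschitz equivalences $H_F \sim H_G$ and $\Sigma(H_F) \sim \Sigma(H_G)$. Since each $f_i$ has isolated singularity in its block, $\Sigma(H_F) = \{x_1 = y_1 = 0\} \cong \fd^{2n-2}$, and similarly for $G$.

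The crux of the argument is to show that the restrictions $F|_{\Sigma(H_F)} = f_2 + \cdots + f_n$ and $G|_{\Sigma(H_G)} = g_2 + \cdots + g_n$, viewed as germs on $\fd^{2n-2}$, are bi-Lipschitz equivalent. Granted this, both satisfy the hypotheses of the theorem for $n - 1$ blocks (the sequences $m_{f_2} < \cdots < m_{f_n}$ and $m_{g_2} < \cdots < m_{g_n}$ remain strictly increasing and at least $2$), so the inductive hypothesis yields $m_{f_i} = m_{g_i}$ for $i = 2, \ldots, n$. To obtain the equivalence of restrictions I would exploit that $F$ is weighted homogeneous with isolated singularity at the origin (take weights $r_i = 1/m_{f_i}$ on the $i$-th block, giving common weighted degree $1$), so Theorem \ref{def} makes $F$ bi-Lipschitz finitely determined. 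I would then construct bi-Lipschitz coordinates adapted to the stratification by $\Sigma(H_F)$ realizing a separated normal form in which the $f_1$-part and the restriction $F|_{\Sigma(H_F)}$ depend on disjoint groups of variables, and do the same for $G$; a cancellation step using $f_1 \sim g_1$ from Theorem \ref{inva}(2) then pushes the equivalence $F \sim G$ down to the transverse parts.

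The principal obstacle is precisely this bi-Lipschitz splitting / cancellation step. The given equivalence $\phi$ with $G = F \circ \phi$ has no reason to map $\Sigma(H_G)$ onto $\Sigma(H_F)$, so a bi-Lipschitz conjugation of the two restrictions must be produced essentially by hand. Building a bi-Lipschitz normal form along the non-isolated singular locus of $H_F$, and controlling higher-order perturbations by the weighted-homogeneous finite-determinacy of Theorem \ref{def}, is the technical heart of the argument that I expect to absorb most of the work.
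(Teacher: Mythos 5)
There is a genuine gap at exactly the point you flag as the "crux": you never obtain, and I do not see how your sketch would obtain, a bi-Lipschitz equivalence between the restrictions $F|_{\Sigma(H_F)} = f_2 + \dots + f_n$ and $G|_{\Sigma(H_G)} = g_2 + \dots + g_n$. Theorem \ref{inva}(2) gives you $m_{f_1}=m_{g_1}$, an equivalence $H_F \sim H_G$ and an equivalence of the singular sets $\Sigma(H_F) \sim \Sigma(H_G)$, but none of these homeomorphisms is known to be compatible with the original $\phi$, and $\phi$ itself need not map $\Sigma(H_G)$ to $\Sigma(H_F)$ (as you note). Your proposed remedy does not fill the hole: Theorem \ref{def} is a triviality statement for deformations $f_t = f + t\Theta$ under filtration conditions; it says nothing about constructing "adapted bi-Lipschitz coordinates" splitting off the $f_1$-block, and a bi-Lipschitz cancellation step of the form "$f_1 + Q \sim g_1 + Q'$ and $f_1 \sim g_1$ imply $Q \sim Q'$" is precisely the converse-splitting difficulty the theorem is about; asserting it "by hand" is assuming what must be proved. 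So the induction you set up cannot be run, because its input (equivalence of the restricted germs) is never produced.

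The paper's proof avoids this entirely by propagating weaker data through the induction. From $\phi$ one forms the rescalings $m\,\phi(x/m)$, which converge along a subsequence to a bi-Lipschitz homeomorphism $d\phi$ (and likewise $d\psi$ for the inverse). By \cite[Lemma 4.6]{sara} one gets $m_{f_1}=m_{g_1}$ together with $d\phi(\Sigma(f_1)) = \Sigma(g_1)$, so $d\phi$ restricts to a bi-Lipschitz homeomorphism $d\tilde{\phi}_2$ of $\{0\}\times\fd^{2(n-1)}$. One then never compares the restricted \emph{functions}; instead one compares gradients: the inequalities of \cite[Lemma 4.5]{sara}, combined with the homogeneity of $Df_2$ and $Dg_2$ and a rescaling limit, show that $m_{g_2} \neq m_{f_2}$ would force a divergent limit against a bounded quantity, giving $m_{f_2}=m_{g_2}$, and the same inequalities give $d\tilde{\phi}_2(\Sigma(f_2)) \subset \Sigma(g_2)$ (and the reverse inclusion via $d\tilde{\psi}_2$), which is what allows the step to iterate. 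The only objects carried forward are the multiplicity equalities and the restriction of the blow-up derivative to successive coordinate subspaces; no equivalence of the residual germs $f_2+\dots+f_n$ and $g_2+\dots+g_n$ is ever claimed. If you want to salvage your outline, you should replace the "split and cancel" step by this derivative-and-gradient-estimate argument (or supply an independent proof of bi-Lipschitz cancellation, which would be a substantially stronger result than the theorem itself).
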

    \begin{proof}
        The statement is a generalization of \cite[Theorem 5.1]{sara} and has a similar proof. Let $\phi = (\phi_{1}, \dots, \phi_{n})$ be a bi-Lipschitz homomorphism of $\fd^{2n}$ such that $F \circ \phi = G$, where $\phi_{i} : \fd^{2n} \longrightarrow \fd^{2}$ for each $i$. Let us denote its inverse by $\psi = (\psi_{1}, \dots, \psi_{n})$ and consider the sequences
        \begin{align*}
            \phi_{m} &= (\phi_{1,m}, \dots, \phi_{n,m}) = m.\phi\left(\frac{x_{1}}{m}, \dots, \frac{x_{n}}{m}\right), \\
             \psi_{m} &= (\psi_{1,m}, \dots, \psi_{n,m}) = m.\psi\left(\frac{x_{1}}{m}, \dots, \frac{x_{n}}{m}\right).
        \end{align*}
        By Arzelà–Ascoli theorem, there exists $\{m_{i}\} \subset \mathbb{N}$ such that $\phi_{m_{i}}, \psi_{m_{i}}$ converge to bi-Lipschitz homeomorphisms $d\phi = (d\phi_{1}, \dots, d\phi_{n})$ and $d\psi = (d\psi_{1}, \dots, d\psi_{n})$, respectively. Moreover, $d\phi$ is the inverse of $d\psi$. These limits can be interpreted as the derivatives of $\phi$ and $\psi$. By the conditions on $f_{i}, g_{i}$, the singular sets are
        \begin{align*}
            \Sigma(f_{i}) = \fd^{2} \times \dots \times \fd^{2} \times \underbrace{\{0\}}_{i} \times \fd^{2} \times \dots \times \fd^{2}.
        \end{align*}        
        By \cite[Lemma 4.6]{sara}, it holds that $m_{f_{1}} = m_{g_{1}}$ and $d\phi(\Sigma(f_{1})) = \Sigma(g_{1})$. This implies
        \begin{align}\label{restr}
            d\phi(p_{2}) = \left(0, d\tilde{\phi}_{2}(p_{2})\right),
        \end{align}
        where $p_{2} \in \{0\} \times \fd^{2(n-1)}$ and $d\tilde{\phi}_{2} = \left(d\phi_{2}, \dots, d\phi_{n}\right)$ is a bi-Lipschitz homeomorphism of $\{0\}\times \fd^{2(n-1)}$, which is the limit of the sequence above constructed from $\tilde{\phi}_{2} = (\phi_{2}, \dots, \phi_{n})$. We remark on the following identity (see \cite[Remark 4.4]{sara}):
        \begin{align}\label{eq}
            \begin{split}
            \lim_{m_{i}\to \infty}m_{i}^{m_{f_{2}}-1}DF_{\tilde{\phi}_{2}(0,y/m_{i})} &= \lim_{m_{i}\to \infty}\left( m_{i}^{m_{f_{2}}-1}(Df_{2})_{\tilde{\phi}_{2}(0,y/m_{i})} + m_{i}^{m_{f_{2}}-1}o(\nm y/m_{i} \nm)\right) \\
            &= \lim_{m_{i}\to \infty}\left((Df_{2})_{m_{i}\tilde{\phi}_{2}(0,y/m_{i})} + m_{i}^{m_{f_{2}}-1}o(\nm y/m_{i} \nm)\right) \\
            &= (Df_{2})_{d\tilde{\phi}_{2}(0,y)},     
            \end{split}
        \end{align}
        for all $(0,y) \in \fd^{2(n-1)}$. As a consequence of \cite[Lemma 4.5]{sara}, we have:        
        \begin{align}\label{ineq2}
            \nm (Df_{2})_{\tilde{\phi}_{2}(p_{2})}\nm &\le L\nm (DG)_{p_{2}}\nm, \\
            \nm (Dg_{2})_{\tilde{\psi}_{2}(p_{2})} \nm &\le L'\nm (DF)_{p_{2}} \nm,
        \end{align}
        where $L,L' > 0$ are constants and $p_{2} \in \{0\} \times \fd^{2(n-1)}$. We now proceed to prove that $m_{f_{2}} = m_{g_{2}}$ and $d\tilde{\phi}_{2}(\Sigma(f_{2})) = \Sigma(g_{2})$. Multiplying \eqref{ineq2} by $m_{i}^{m_{g_{2}-1}}$, we obtain
        \begin{align}\label{ineq3}
            \nml m_{i}^{m_{g_{2}}-1}\left(Df_{2}\right)_{\tilde{\phi}_{2}(0,y/m_{i})}\nmr \le L \nml m_{i}^{m_{g_{2}}-1}\left(DG\right)_{(0,y/m_{i})} \nmr.
        \end{align}

        Write $(DG)_{(0,y/m_{i})} = (Dg_{2})_{(0,y/m_{i})} + o(\nm y/m_{i}\nm)$, where $o(\nm y/m_{i}\nm)$ denotes higher order terms. Multiplying this equality by $m_{i}^{m_{g_{2}}-1}$ and taking limits, the right-hand side tends to $(Dg_{2})_{(0,y)}$ because $Dg_{2}$ is homogeneous of degree $m_{g_{2}}-1$. This implies \eqref{ineq3} is bounded provided $y$ is. We conclude the same for $m_{i}^{m_{f_{2}}-1}DF_{(0,y/m_{i})}$, which tends to $(Df_{2})_{(0,y)}$. By isolated singularity condition and the fact that $d\tilde{\phi}_{2}$ restricts as a homeomorphism of $\fd^{2(n-1)}$, there exists $y_{0} \neq 0$ such that $(Df_{2})_{(0,d\tilde{\phi}_{2}(0,y_{0}))} \neq 0$. If $m_{g_{2}} > m_{f_{2}}$, by \eqref{eq}, the following limit diverges and contradicts the boundness of \eqref{ineq3}:
        \begin{align*}
            \lim_{m_{i}\to\infty}\nml m_{i}^{m_{g_{2}-1}}(Df_{2})_{\tilde{\phi}_{2}(0,y_{0}/m_{i})}\nmr &= \lim_{m_{i}\to\infty}\nml m_{i}^{m_{g_{2}}-m_{f_{2}}}m_{i}^{m_{f_{2}}-1}(Df_{2})_{\tilde{\phi}_{2}(0,y_{0}/m_{i})}\nmr \\
            &= \lim_{m_{i}\to \infty}m_{i}^{m_{g_{2}}-m_{f_{2}}}\nml (Df_{2})_{d\tilde{\phi}_{2}(0,y_{0})}\nmr.
        \end{align*}
        Analougously for the case $m_{g_{2}} < m_{f_{2}}$ and we then conclude $m_{f_{2}} = m_{g_{2}}$. Identify now $\Sigma (f_{2}) \simeq \{0\} \times \Sigma (f_{2})$ and $\Sigma (g_{2}) \simeq \{0\} \times \Sigma (g_{2})$. We shall see that $d\tilde{\phi}_{2}(0,y)\left(\Sigma(f_{2})\right)$ $\subset \Sigma(g_{2})$ and $(d\tilde{\psi}_{2})(0,y)\subset \Sigma(f_{2})$. Since $m_{f_{2}}=m_{g_{2}}$, \eqref{ineq3} implies that
        \begin{align*}
            \nml (Df_{2})_{d\tilde{\phi}_{2}(0,y)} \nmr \le L \nml (Dg_{2})_{(0,y)}\nmr.
        \end{align*}
        If $p = (0,y) \in \Sigma(g_{2})$, then $(Df_{2})_{d\tilde{\phi}_{2}(0,y)} = 0$ and similarly for $g_{2}$ and $d\tilde{\psi}_{2}$. This proves both inclusions. These two facts imply that $d\tilde{\phi}_{2}$ restricts to $\fd^{2(n-2)}$ as a bi-Lipschitz homeomorphism as $d\phi$ in \eqref{restr}. For $i = 3, \dots, n$ and proceeding inductively, we consider the bi-Lipschitz restriction to $\fd^{2(n-i)}$ of the homeomorphism $d\tilde{\phi}_{2}$ and Equations \eqref{ineq2} for $f_{i}$ and $g_{i}$ at the points $\tilde{\phi}_{i}(p_{i})$ and $\tilde{\psi}_{i}(p_{i})$, respectively, to obtain that $m_{f_{i}} = m_{g_{i}}$ for all $i = 1, \dots, k$.    
    \end{proof}

    \begin{Rem}
       \normalfont
       Suppose that $m_{f_{i}} = \dots = m_{f_{i+k}}$ for some $k \ge 1$ and define $\tilde{f}_{i} = f_{i} + \dots + f_{i+k}$. Following the proof above, we have that $\nm (D\tilde{f}_{i})_{d\tilde{\phi}_{i}(0,y)}\nm \le L \nm (D\tilde{g}_{i})_{(0,y)}\nm$ for some $L > 0$, where $\tilde{g}_{i} = g_{i} + \dots + g_{i+l}$ and $m_{f_{i}} = m_{g_{i}} = \dots = m_{g_{i+l}}$ for some $l$. A reverse inequality also holds at a point $d\tilde{\psi}_{i}(0,y)$. Then the singular sets of $\tilde{f}_{i}$ and $\tilde{g}_{i}$ are related by a homeomorphism, and thus $l=k$. This shows that the argument may be applied to blocks of germs with distinct multiplicities, and the inequalities are not restrictive.
    \end{Rem}
    
Now, we can deduce that the bi-Lipschitz type of $\fab$ within the family of Pham-Brieskorn polynomials is determined by $\abf$ and $\bbf$, which can be seen as a generalization of Theorem \ref{etsu}. 

\begin{The}\label{class1}
Let $b_{i}, d_{i} \ge 0$ for all $i=1, \dots, n,$ and suppose that $\fab$ is bi-Lipschitz equivalent to $\fcd$. Then the following statements hold:
    \begin{enumerate}
        \item If $a_{i}, c_{i} \ge 2$ for all $i=1, \dots, n$, then $\abf = \cbf$ and $a_{i} + 2b_{i} = c_{\sigma(i)} + 2d_{\sigma(i)}$ for every $i = 1, \dots, k$ and some permutation $\sigma$.
        \item If $a_{i} = 0$ for all $i = 1, \dots, k$ and $a_{i} \ge 2$ for all $i = k+1, \dots, n$, then $c_{i} = 0$ for all $i=, \dots, k$, and $a_{i} +2b_{i} = c_{\sigma(i)} + 2d_{\sigma(i)}$ for all $i = 1, \dots, n$ and some permutation $\sigma$.
        \item If $a_{1} = 1$, then $c_{1} = 1$. Moreover, $a_{i} + 2b_{i} = c_{\sigma(i)} + 2d_{\sigma(i)}$ for every $i = 1, \dots, n$ and some permutation $\sigma$.
    \end{enumerate}
\end{The}
    \begin{proof}
        First, by Propositions \ref{submfam} and \ref{cortop}, the topological type determines $\abf$ and $\cbf$. Moreover, from the proof of Lemma \ref{l1}, the monomials $z_{i}^{a_{i}+b_{i}}\cj{z}_{i}^{b_{i}}$ have an isolated singularity at the origin, then the same holds for their real parts. We apply now Theorem \ref{splitt2} for the real parts of the germs, for which the multiplicities of the monomials coincide up to a permutation $\sigma$ applied in the indices of one of the functions, and the result follows.
    \end{proof}

For each $i$ in the first case, $a_{i} + 2b_{i} = c_{\sigma(i)} + 2d_{\sigma(i)}$ and $c_{\sigma(i)} = a_{j}$ for some $j$. Then $d_{\sigma(i)}$ is depends on $a_{i}, a_{j}$, and $b_{i}$, and thus $\dbf$ is determined by $\abf$ and $\bbf$. The next corollary shows that there is no bi-Lipschitz equivalence between the mixed and complex Pham-Brieskorn polynomials, except possibly for topological submersions.

\begin{Cor}
    Let $b_{1} \le \dots \le b_{n}$ and suppose that $\fab$ is bi-Lipschitz equivalent to $\gc$. Then the following statements hold:
    \begin{enumerate}
        \item If $a_{i} \ge 2$ for all $i$, then $\abf = \cbf$ and $\bbf = 0$, that is, $\fab = \gc$.
        \item If $a_{1} = 1$, then $c_{1} = 1$ and $a_{i} + 2b_{i} = c_{i}$ for all $i$. In particular, $b_{1} = 0$.
    \end{enumerate}
\end{Cor}
    \begin{proof}
        In the first item, considering the normal form $\ga$ for $\fab$, Theorem \ref{etsu} implies that $\abf = \cbf$ and Theorem \ref{class1} shows that $a_{i} + 2b_{i} = c_{i}$ for all $i$. Analogously for the second case, but now applying Proposition \ref{cortop}.    
    \end{proof}
    
    In particular, for the above cases, the mixed polynomial $\fab$ is bi-Lipschitz equivalent to its topological normal form $\ga$ if and only if $\bbf = 0$. 

\section{Families and deformations}\label{inff}

The criteria above allow us to construct families of mixed functions with the same topological type, but distinct Lipschitz types. This is generalized through the finite determinacy Theorem \ref{def} and is a direct consequence of Theorem \ref{class1}.

\begin{Prop}\label{corfam}
     Let $\abf$ be a fixed vector of integers, where $a_{i} \ge 0$ for all $i=1, \dots, n$ and $a_{j} \ge 1$ for some $j$. There exist subfamilies of $\Gamma_{\abf} = \{ f_{\abf,\bbf}: (\co^{n},0) \longrightarrow (\co,0): b_{1}, \dots, b_{n} \in \mathbb{N}\}$ that are topologically trivial and contain infinitely many distinct Lipschitz classes.
\end{Prop}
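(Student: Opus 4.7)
The plan is to deduce both assertions directly from Proposition \ref{topfor} and Theorem \ref{class1}, without requiring any new ingredients.

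For topological triviality, I would reorder the coordinates so that the zero entries of $\abf$ (if any) occupy the first $k$ positions. If $k=0$, then by Proposition \ref{topfor}(1) every member $f_{\abf,\bbf} \in \Gamma_{\abf}$ is topologically equivalent to the holomorphic Pham-Brieskorn polynomial $\ga$, which depends only on $\abf$. If $k \ge 1$, then by Proposition \ref{topfor}(2) each member is equivalent to $\sum_{i=1}^{k}\nm z_{i} \nm^{2b_{i}} + \sum_{i=k+1}^{n}z_{i}^{a_{i}}$; since $b_{i} \ge 1$ for $i \le k$, the radial map $z_{i} \mapsto z_{i}\nm z_{i} \nm^{1/b_{i} - 1}$ of $\co$ is a homeomorphism converting $\nm z_{i} \nm^{2b_{i}}$ into $\nm z_{i} \nm^{2}$, and applying it coordinate-wise identifies every such normal form with the $\bbf$-independent polynomial $\sum_{i=1}^{k}\nm z_{i} \nm^{2} + \sum_{i=k+1}^{n}z_{i}^{a_{i}}$.

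For the existence of infinitely many Lipschitz classes, I would take any two members $f_{\abf,\bbf}, f_{\abf,\dbf} \in \Gamma_{\abf}$ assumed to be bi-Lipschitz equivalent and invoke Theorem \ref{class1} with $\cbf = \abf$. Cases (1) and (2) of that theorem yield $\bbf = \dbf$ up to a permutation of indices (necessarily one stabilising $\abf$), while case (3) gives $a_{i} + 2b_{i} = a_{i} + 2d_{i}$ and thus $\bbf = \dbf$ outright. Hence each bi-Lipschitz class within $\Gamma_{\abf}$ contains at most $n!$ vectors $\bbf$, and since $\bbf$ ranges over the infinite set $\mathbb{N}^{n}$, the family must split into infinitely many Lipschitz types.

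The only point requiring a brief check is that $z \mapsto z\nm z \nm^{1/b-1}$ is genuinely a homeomorphism of $\co$ for each $b \ge 1$, with continuous inverse $w \mapsto w\nm w \nm^{b-1}$; this is elementary. Everything else is a direct reading of the cited theorems, so I do not anticipate a real obstacle — the substantive mathematical content has already been absorbed by the Yoshinaga-Suzuki theorem (via Theorem \ref{cortop}) and the splitting result Theorem \ref{splitt2}.
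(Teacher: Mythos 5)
Your proof is correct and follows essentially the same route as the paper: topological triviality read off from the normal forms of Proposition \ref{topfor}, and infinitely many Lipschitz classes deduced from Theorem \ref{class1} applied with $\cbf=\abf$. Your two small additions — the radial homeomorphism removing the $\bbf$-dependence of the terms $\nm z_{i}\nm^{2b_{i}}$ when some $a_{i}=0$, and the bound of $n!$ exponent vectors per bi-Lipschitz class to absorb the permutation ambiguity — are just careful write-ups of steps the paper leaves implicit, and are fine.
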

\begin{proof}
    If $a_{i} \ge 2$ for all $i$, then every $\fab$ is topologically equivalent to $\ga$, and the statement holds for the entire family. Suppose now that $a_{i} = 0$ for $i= 1, \dots k$ and $a_{i} \ge 2$ for $i = k+1, \dots, n$. Let $b_{1} \le \dots \le b_{k}$ be fixed. Then every $\fab$ is topologically equivalent to $\fabt$, where $\bbf = (b_{1}, \dots, b_{k}, b_{k+1}, \dots, b_{n})$. On the other hand, in both cases, $f_{\abf,\bbf}$ and $f_{\abf,\cbf}$ are bi-Lipschitz equivalent if and only if $\bbf = \cbf$, by Theorem \ref{class1}, provided that $b_{1} \le \dots \le b_{n}$ and $c_{1} \le \dots \le c_{n}$. Moreover, by the same result, if $a_{1} = 1$, then for $\bbf$ and $\cbf$ such that $b_{1} \neq c_{1}$, $f_{\abf,\bbf}$ and $f_{\abf,\cbf}$ are not bi-Lipschitz equivalent.
\end{proof}
This allows us to construct a variety of families with the same property.
    \begin{Lem}\label{l1}
        Let $\textbf{a}, \textbf{b}$ be integer vectors such that $a_{i},b_{i} \ge 1$ for all $i = 1, \dots, n$, $d = \lcm\{a_{i} + 2b_{i}\}$ and $r_{i} = d/(a_{i}+2b_{i})$. Then the mixed function germ $\fab$ is weighted homogeneous of type $(r_{1},\dots, r_{n};d,d)$ and deformations $f_{t}(z,\cj{z}) = \fab + t\Theta(z,\cj{z})$, $\Theta(z,\cj{z}) = (\Theta_{1},\Theta_{2})$ with $\fl(\Theta_{i}) > d - r_{n} + r_{1}$, for $i=1,2$, are bi-Lipschitz trivial.
    \end{Lem}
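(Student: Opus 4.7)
The plan is to verify the two hypotheses of Theorem~\ref{def}(2) for $\fab$, viewed as a real analytic map germ $(\re^{2n},0) \longrightarrow (\re^{2},0)$ via its real and imaginary parts, and then apply that theorem directly to the deformation $f_{t} = \fab + t\Theta$.

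First I would establish the weighted homogeneity by a direct calculation. Writing $z_{j} = x_{j} + \ibf y_{j}$ and assigning weight $r_{j}$ to both real coordinates $x_{j}$ and $y_{j}$, each monomial $z_{j}^{a_{j}+b_{j}}\cj{z}_{j}^{b_{j}}$ expands into a polynomial in $(x_{j},y_{j})$ every term of which has weighted degree
\[
r_{j}(a_{j}+b_{j}) + r_{j}b_{j} = r_{j}(a_{j}+2b_{j}) = d,
\]
by the very choice $r_{j} = d/(a_{j}+2b_{j})$. Summing over $j$ shows that $\Re(\fab)$ and $\Im(\fab)$ are both weighted homogeneous of degree $d$ with respect to the weight vector $(r_{1}, r_{1}, r_{2}, r_{2}, \dots, r_{n}, r_{n})$ on $\re^{2n}$, which is what the statement abbreviates as type $(r_{1},\dots,r_{n};d,d)$ in complex notation.

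Next I would verify the isolated singularity hypothesis required by Theorem~\ref{def}. Under the assumptions $a_{i}, b_{i} \ge 1$, this is exactly the mixed Pham-Brieskorn isolated singularity property of Seade and Ruas-Seade-Verjovsky established in \cite{sead, ruas3} and recalled in the introduction: the separation of variables in $\fab = \sum_{i} z_{i}^{a_{i}+b_{i}}\cj{z}_{i}^{b_{i}}$ together with the fact that each summand has an isolated critical point at $z_{i} = 0$ forces the Jacobian of $(\Re\fab,\Im\fab)$ to have maximal rank off the origin.

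With both hypotheses in hand, Theorem~\ref{def}(2) applies to $f_{t}$ whenever the components $\Theta_{i}$ satisfy the filtration bound in the statement, yielding bi-Lipschitz triviality of $f_{t}$ for small $t$. I do not foresee any substantial obstacle; the only subtle point is the correspondence between the complex weighting convention (one weight $r_{j}$ per $z_{j}$) and the real weighting convention required by Theorem~\ref{def} (one weight per real coordinate of $\re^{2n}$), which is handled by the duplication $(r_{j},r_{j})$ above and leaves the extremal weight difference, and therefore the filtration threshold, unchanged.
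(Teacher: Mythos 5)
Your proposal is correct and follows essentially the same route as the paper: check that $\fab$ is weighted homogeneous of type $(r_{1},\dots,r_{n};d,d)$ with an isolated singularity at the origin and then invoke Theorem~\ref{def}. The only difference is that the paper verifies the isolated-singularity hypothesis by an explicit computation (the $2\times 2$ Jacobian block in $z_{i},\cj{z}_{i}$ has determinant $\tfrac{1}{2}\left[b_{i}^{2}-(a_{i}+b_{i})^{2}\right]\nm z_{i}\nm^{2(a_{i}+2b_{i}-1)}$, nonzero off $z_{i}=0$), whereas you delegate this to \cite{sead,ruas3} together with the separation-of-variables remark, which is an acceptable substitute.
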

        \begin{proof}
            We shall see that the origin is an isolated singularity of $\fab$ and then apply Theorem \ref{def}. Let $z_{i} = (x_{i},y_{i})$, where $x_{i},y_{i} \in \re^{2}$, then
        \begin{align*}
            x_{i} &= \frac{1}{2}(z_{i} + \cj{z}_{i}), \\
            y_{i} &= \frac{1}{2}(z_{i} - \cj{z}_{i}).
        \end{align*}
        Therefore, for $g_{i} = z_{i}^{a_{i}}\nm z_{i}\nm^{2b_{i}} = z_{i}^{a_{i}+b_{i}}\cj{z}_{i}^{b_{i}}$, the real and imaginary parts are:
        \begin{align*}
            2\Re(g_{i}) &= \nm z_{i} \nm^{2b_{i}}(z_{i}^{a_{i}} + \cj{z}_{i}^{a_{i}}),\\
            2i\Im(g_{i}) &= \nm z_{i} \nm^{2b_{i}}(z_{i}^{a_{i}} - \cj{z}_{i}^{a_{i}}).
        \end{align*}
The terms of interest will correspond to the Jacobian matrix formed from the partial derivatives of $g_{i}$ in relation to $z_{i}$ and $\cj{z}_{i}$:
        \begin{align*}
           \frac{1}{2} \cdot \begin{pmatrix}
                (a_{i}+b_{i})z_{i}^{a_{i}+b_{i}-1}\cj{z}_{i}^{b_{i}} + b_{i}z_{i}^{b_{i}-1}\cj{z}_{i}^{a_{i}+b_{i}}&  b_{i}z_{i}^{a_{i}+b_{i}}\cj{z}_{i}^{b_{i}-1} + (a_{i}+b_{i})z_{i}^{b_{i}}\cj{z}_{i}^{a_{i}+b_{i}-1}\\
                (a_{i}+b_{i})z_{i}^{a_{i}+b_{i}-1}\cj{z}_{i}^{b_{i}} - b_{i}z_{i}^{b_{i}-1}\cj{z}_{i}^{a_{i}+b_{i}} & b_{i}z_{i}^{a_{i}+b_{i}}\cj{z}_{i}^{b_{i}-1} - (a_{i}+b_{i})z_{i}^{b_{i}}\cj{z}_{i}^{a_{i}+b_{i}-1}
            \end{pmatrix}
        \end{align*}
        whose determinant is $1/2\left[b_{i}^{2}-(a_{i}+b_{i})^{2}\right]\nm z_{i} \nm^{2(a_{i}+2b_{i}-1)}$. From the conditions on $a_{i}$ and $b_{i}$, this determinant is zero if and only if $z_{i} = 0$. 
    \end{proof}
    
    \begin{The}\label{deffam}
    Let $f(w,z) = p(w,\cj{w}) + \sum_{i=1}^{n}z_{i}^{a_{i}}q_{i}$ be a mixed function germ such that:
    \begin{enumerate}
        \item $p(w,\cj{w})$ is a real mixed function with $j^{l}(p) = \sum_{i=1}^{m}\nm w_{i} \nm^{2l_{i}}$, for integers $l > m$ with $l$ sufficient large and $l_{i} \ge 1$ for all $i$.
        \item $q_{i}$ is a real mixed polynomial of the form $q_{i}(z,\cj{z}) = \nm z_{i} \nm^{2b_{i}} + \tilde{q}_{i}(z,\cj{z})$.
        \item $d = \lcm_{i}\{a_{i} + 2b_{i}\}$ and $r_{i} = d/(a_{i} + 2b_{i})$.
        \item $\fl\left(\sum_{i=1}^{n}\tilde{q}_{i}\right) > d-r_{n}+r_{1}$.
    \end{enumerate}
 Then $f$ is bi-Lipschitz-equivalent to $h = \sum_{i=1}^{m}\nm w_{i} \nm^{2l_{i}} + \fab(z,\cj{z})$. Moreover, $h$ is topologically equivalent to $g = \sum_{i=1}^{m}\nm w_{i} \nm^{2l_{i}} + \ga(z,\cj{z})$. In particular, if $a_{1} = 1$, then $f$ is a topological submersion.
    \end{The}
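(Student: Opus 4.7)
The proof naturally splits into three parts: the bi-Lipschitz equivalence of $f$ and $h$, the topological equivalence of $h$ and $g$, and the submersion conclusion.

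For the first part, the plan is to apply the finite determinacy Theorem \ref{def} to $h$ viewed as a weighted homogeneous germ, with $f$ as a filtration-large deformation of $h$. Assigning $w_i$ the weight $r'_i = d/(2l_i)$ (so that $|w_i|^{2l_i}$ has weighted degree $d$) and combining with Lemma \ref{l1}, the germ $h$ is weighted homogeneous of type $(r'_1,\ldots,r'_m,r_1,\ldots,r_n;d,d)$; its isolated singularity at the origin follows since for any $(w,z)\ne 0$ either the $w$-derivatives of $\sum_i|w_i|^{2l_i}$ or the $z$-derivatives of $\fab$ (from Lemma \ref{l1}) must be nonzero. Writing $f = h + \Theta$ with
\[
\Theta = \bigl(p - \sum\nolimits_i |w_i|^{2l_i}\bigr) + \sum\nolimits_i z_i^{a_i}\tilde{q}_i,
\]
the first summand involves only $(w,\cj{w})$-monomials of standard degree $>l$ and so has weighted filtration at least $(l+1)\min_i r'_i$, which exceeds any prescribed bound once $l$ is taken large enough; the second summand is $w$-independent, so its filtration coincides with that in Lemma \ref{l1}'s system and is bounded below by $d - r_n + r_1$ by hypothesis (4). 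Provided the $l_i$ are chosen (as part of the "sufficiently large" condition) so that $r'_i \le r_n$ for all $i$, the extremal weights of the extended system remain $r_1$ and $r_n$, and Theorem \ref{def} applies to give bi-Lipschitz triviality of $f_t = h + t\Theta$ for small $t$. A standard weighted rescaling using the homogeneity of $h$ then propagates this to triviality along $[0,1]$, producing the bi-Lipschitz equivalence of $f$ and $h$.

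For the second part, Proposition \ref{topfor} supplies a germ of homeomorphism $\phi:(\co^n,0)\to(\co^n,0)$ with $\ga\circ\phi=\fab$. Setting $\Phi(w,z)=(w,\phi(z))$ then yields
\[
(g\circ\Phi)(w,z) = \sum\nolimits_i |w_i|^{2l_i} + \ga(\phi(z)) = \sum\nolimits_i |w_i|^{2l_i} + \fab(z) = h(w,z),
\]
so $h$ and $g$ are topologically right-equivalent. Finally, when $a_1 = 1$ the Pham--Brieskorn normal form reads $\ga(z) = z_1 + z_2^{a_2} + \cdots$, so $g$ satisfies $\partial g/\partial z_1(0) = 1$ and is a smooth submersion at the origin; combining the two previous equivalences, $f$ is topologically equivalent to $g$ and is therefore a topological submersion.

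The principal obstacle I foresee is the first step: reconciling the filtration condition (4), which is naturally stated in the $z$-only weighted system of Lemma \ref{l1}, with the strict filtration requirement of Theorem \ref{def} in the extended $(w,z)$-weighted system. This forces a careful coordination between the $l_i$ (encoding the structure of $p$) and the weights $r_i$ (encoding $\fab$), together with a rescaling argument that upgrades bi-Lipschitz triviality for small $t$ into equivalence at $t=1$.
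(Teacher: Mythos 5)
Your second and third steps are fine and agree with the paper: extending the homeomorphism $\phi$ of Proposition \ref{topfor} by the identity in the $w$-variables gives $g\circ(\mathrm{id}\times\phi)=h$, and when $a_{1}=1$ the germ $g$ is an actual submersion, so $f$ is a topological submersion. The bi-Lipschitz step, however, takes a genuinely different route from the paper and, as written, has a gap. The paper applies Theorem \ref{def} \emph{twice, separately in the two groups of variables}: once to $p$ as a deformation of $\sum_{i}\nm w_{i}\nm^{2l_{i}}$ (here ``$l$ sufficiently large'' is exactly what makes the remainder $p-j^{l}(p)$ satisfy the filtration hypothesis in the $w$-weighted system), and once to $\sum_{i}z_{i}^{a_{i}}q_{i}$ as a deformation of $\fab$ via Lemma \ref{l1} together with hypothesis (4). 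Since $f$, $h$ and both model germs split as sums over the disjoint variable groups, the product of the two bi-Lipschitz trivializations is a bi-Lipschitz homeomorphism of $\re^{2m}\times\re^{2n}$ carrying $h$ to $f$, and no coordination between the $l_{i}$ and the $r_{j}$ is ever needed.

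You instead merge everything into a single weighted system with weights $r'_{i}=d/(2l_{i})$ on the $w$-block, and this is where the argument breaks. The bound required by Theorem \ref{def} is $d$ plus the difference between the largest and smallest weights of the system actually used, whereas hypothesis (4) only provides the bound $d-r_{n}+r_{1}$ computed from the $z$-weights alone. Your fix --- ``choose the $l_{i}$ so that $r'_{i}\le r_{n}$'' --- is not available: in hypothesis (1) the quantity that may be taken large is the jet order $l$, while the exponents $l_{i}$ are fixed data of $p$ subject only to $l_{i}\ge 1$. For instance $l_{i}=1$ forces $r'_{i}=d/2$, which lies outside the interval spanned by the $r_{j}$ as soon as every $a_{j}+2b_{j}>2$; the merged weight spread then strictly exceeds $r_{1}-r_{n}$, so hypothesis (4) no longer implies the hypothesis of Theorem \ref{def} for the term $\sum_{i}z_{i}^{a_{i}}\tilde{q}_{i}$, and enlarging $l$ only helps the $w$-remainder, not this term. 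The repair is either to add an extra hypothesis tying the $l_{i}$ to the $r_{j}$ (which the theorem does not assume) or, more simply, to decouple the two variable groups and trivialize each separately as the paper does, composing the resulting homeomorphisms by a direct product.
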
   
        \begin{proof}
           The assertion follows straightforwardly from Lemma \ref{l0} and Theorem \ref{def} applied to $p(w,\cj{w})$ and then to $\sum_{i=1}^{n}z_{i}^{a_{i}}q_{i}$ and also by considering the topological normal forms in Proposition \ref{topfor}.
    \end{proof}
    Thus, we obtain the topological normal form $h = \sum_{i=1}^{m}\nm w_{i} \nm^{2l_{i}} + \sum_{i=1}^{n}z_{i}^{a_{i}}$ for $f$. As a consequence of Proposition \ref{corfam}, we have the following results.
    
    \begin{Cor}
        Let $\abf$ be fixed, where $a_{i} \ge 1$ for all $i = 1, \dots, n$. Let $\Lambda_{\abf}$ be the family of mixed functions $h_{\abf,\textbf{b}} = \sum_{i=1}^{n}z_{i}^{a_{i}}q_{i}$ such that $q_{i}$ is a real mixed function as in Theorem \ref{deffam}. There exist subfamilies of $\Lambda_{\abf}$ that are topologically trivial and contain infinitely many distinct Lipschitz classes.
    \end{Cor}

    \begin{Cor}
        Let $\abf$ be fixed, where $a_{i} \ge 1$ for all $i = 1, \dots, n$. Let $\Lambda_{\abf}$ be the family of mixed functions $h_{\abf,\textbf{b}} = p(w,\cj{w}) + \sum_{i=1}^{n}z_{i}^{a_{i}}q_{i}$ such that $p$ and $q_{i}$ are real mixed functions as in Theorem \ref{deffam}. There exist subfamilies of $\Lambda_{\abf}$ that are topologically trivial and contain infinitely many distinct Lipschitz classes.
    \end{Cor}

\section{Mixed surfaces}\label{mixsuf}

In this section, we consider the problem of comparing the Lipschitz geometry of the mixed surfaces $\fab^{-1}(0)$, where $\fab: (\co^{2},0) \longrightarrow (\co,0)$. The first step is to determine the tangent cones, which are 2-dimensional in most cases. This yields conditions for inner, outer, and ambient equivalences. Throughout this section, we shall denote $\fab^{-1}(0) = \Vab$ and $\ga^{-1}(0) = \Va$. Moreover, for the mixed surface $\Vab$ we define $$\muab := \frac{a_{2}+2b_{2}}{a_{1}+2b_{1}}.$$ We shall prove that this number is an invariant of the subanalytic outer bi-Lipschitz geometry of a large class of mixed Pham-Brieskorn surfaces.

\begin{Lem}\label{tgcsup}
    Let $b_{1},b_{2} \ge 0$ and $a_{2} \ge 1$. The following statements hold:
    \begin{enumerate}
        \item If $a_{1} \ge 1$ and $\muab > 1$, then $C(\Vab,0) = \{ z_{1} = 0\}$.
        \item If $a_{1} \ge 1$ and $\muab = 1$, then $C(\Vab,0) = \Vab$.
        \item If $a_{1} = 0$ and $\muab > 1$, then $C(\Vab,0) \subset \left\{\Im z_{2}^{a_{2}} = 0\right\} \cap \{ z_{1} = 0\}$.
        \item If $a_{1} = 0$ and $\muab < 1$, then $C(\Vab,0) = \{ z_{2} = 0\}$.
        \item If $a_{1} = 0$ and $\muab = 1$, then $C(\Vab,0) = \Vab$.
    \end{enumerate}
    We say that $\Vab$ is of type (1), (2), (3), (4), or (5) according to each item above.
\end{Lem}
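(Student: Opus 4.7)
The plan is to handle the five cases separately, combining throughout an explicit description of $\Vab$ near the origin obtained from the defining equation
\[ z_1^{a_1}|z_1|^{2b_1} + z_2^{a_2}|z_2|^{2b_2} = 0 \]
with the distance characterisation of tangent vectors provided by Lemma \ref{chale}. Taking absolute values in this equation yields $|z_1|^{a_1+2b_1} = |z_2|^{a_2+2b_2}$ along $\Vab$ whenever $a_1 \ge 1$, which immediately encodes the asymmetry between the two coordinates.

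Cases (2) and (5) should follow at once from weighted homogeneity. I would check that $\fab(\lambda z_1, \lambda z_2) = \lambda^{d}\fab(z_1, z_2)$ for every $\lambda > 0$, with $d = a_1 + 2b_1$ in case (2) and $d = 2b_1$ in case (5). Consequently $\Vab$ is invariant under positive real scaling, hence a cone at the origin, and $C(\Vab, 0) = \Vab$ is immediate from the definition of the tangent cone.

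For the asymmetric cases (1) and (4), the coordinate with smaller weighted degree dominates asymptotically. In case (1), the relation $|z_1| = |z_2|^{\alpha}$ with $\alpha = (a_2 + 2b_2)/(a_1 + 2b_1) > 1$ forces $|z_1^{(k)}|/\nm p_k\nm \to 0$ along any sequence $p_k = (z_1^{(k)}, z_2^{(k)}) \in \Vab$ tending to the origin, proving $C(\Vab, 0) \subset \{z_1 = 0\}$. For the reverse inclusion I fix $v = (0, v_2)$, solve $\fab(z_1, tv_2) = 0$ for $z_1 = z_1(t)$ (which admits a solution of modulus $O(t^{\alpha})$ by the absolute-value identity), and obtain a point of $\Vab$ within $o(t)$ of $tv$, so Lemma \ref{chale} places $v$ in the tangent cone. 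Case (4) is symmetric, with the roles of $z_1$ and $z_2$ interchanged (and $|z_2|/|z_1|$ now forced to zero via the inequality $2b_1 > a_2 + 2b_2$).

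Case (3) is the most delicate and I expect the main difficulty to lie here. The inclusion $C(\Vab, 0) \subset \{z_1 = 0\} \cap \{\Im z_2^{a_2} = 0\}$ should follow from Proposition \ref{rem0} and Remark \ref{rem1}: the lowest-degree homogeneous components of $\Re \fab$ and $\Im \fab$ are $|z_1|^{2b_1}$ (of degree $2b_1 < a_2 + 2b_2$ by hypothesis) and $\Im(z_2^{a_2})|z_2|^{2b_2}$, whose common zero locus in $\co^2$ is precisely the right-hand side, so the chain $C(\Vab, 0) \subset C_a(\Vab, 0) \subset V(I_0)$ closes the inclusion. For the reverse inclusion, directions $v = (0, v_2)$ with $\Re v_2^{a_2} < 0$ admit a genuine solution of $|z_1|^{2b_1} = -\Re((tv_2)^{a_2})|tv_2|^{2b_2}$ and hence produce a point of $\Vab$ within $O(t^{(a_2+2b_2)/(2b_1)}) = o(t)$ of $tv$, exactly as in case (1); the remaining directions, where $v_2^{a_2}$ lies on the non-negative real axis and the corresponding ray carries no nearby points of $\Vab$, will require a finer approximation argument using the nearest admissible sheet, and this is the step I anticipate as the technical crux of the proof.
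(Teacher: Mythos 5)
Your treatment of cases (1), (2), (4), and (5) is correct and essentially the paper's own argument: one inclusion via $C(\Vab,0)\subset C_a(\Vab,0)\subset V(I_0)$ (your direct order comparison $|z_1|=|z_2|^{\alpha}$ is a harmless variant), the reverse inclusion by solving for the small coordinate and invoking Lemma \ref{chale}, and homogeneity for (2) and (5).

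The genuine issue is precisely the step you deferred in case (3), and you should know that it cannot be closed: for $v=(0,v_2)$ with $v_2^{a_2}$ a positive real number, $v$ is \emph{not} a tangent vector. Indeed, on $\Vab\setminus\{0\}$ one has $|z_1|^{2b_1}=-z_2^{a_2}|z_2|^{2b_2}$, so $\arg z_2$ lies in the finite set $\{(\pi+2k\pi)/a_2\}$ and $|z_1|=|z_2|^{(a_2+2b_2)/(2b_1)}=o(|z_2|)$; hence every point of $\Vab$ near the origin makes an angle bounded below (roughly by $\sin(\pi/a_2)$) with the ray $t\mapsto tv$, so $d(tv,\Vab)\ge c\,t$ for some $c>0$ and Lemma \ref{chale} excludes $v$. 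Consequently the geometric tangent cone in case (3) is only the union of the $a_2$ rays $\{z_1=0,\ \Im z_2^{a_2}=0,\ \Re z_2^{a_2}\le 0\}$, while the set $\{z_1=0\}\cap\{\Im z_2^{a_2}=0\}$ that you (and the statement) aim for is the algebraic cone $V(I_0)$, which is strictly larger ($2a_2$ rays versus $a_2$). The paper's own proof does not resolve this either: it treats (3) exactly like (1), producing a nearby point of $\Vab$ by solving $|z_1|^{2b_1}=|t_kv_2|^{a_2+2b_2}$ in the $z_1$-coordinate, which tacitly requires $(t_kv_2)^{a_2}|t_kv_2|^{2b_2}$ to be a non-positive real number — exactly the restriction you noticed. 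So your instinct was right, there is no ``finer approximation argument using the nearest admissible sheet,'' and the honest conclusion in case (3) is $C(\Vab,0)=\{z_1=0,\ z_2^{a_2}\in\re_{\le 0}\}$; the subsequent applications in the paper only use that this cone is a finite union of rays (in particular one-dimensional), so they are unaffected.
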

    \begin{proof}
        By Proposition \ref{rem0} and Remark \ref{rem1}, one inclusion is clear in all cases. Recall the characterization in Lemma \ref{chale}. For item (1), let $v = (0,v_{2}) \in \{z_{1}=0\}$ and $(t_{k}) \subset \re$ any sequence of positive integers such that $t_{k} \longmapsto 0$. Since $\Vab\setminus\{0\} \subset \co^{*2}$ the distance of $t_{k}v$ to $\Vab$ is bounded by the norm of the corresponding $z_{1}$-coordinate, which is $\nm z_{1} \nm = t_{k}^{\muab}\nm v_{2} \nm^{\muab}$, where $a_{1} \ge 1$. Therefore
        \begin{align*}
            \lim_{k\to \infty} \frac{d(t_{k}v, \Vab)}{t_{k}} \le \lim_{k \to \infty} \frac{\left(t_{k}\nm v_{2}\nm\right)^{\muab}}{t_{k}} = 0,
        \end{align*}
        since $\muab > 1$ and similarly for the item (4), but considering the reverse inequality $\muab < 1$. Let us consider items (2) and (5). Since $\Vab$ is homogeneous, if $v \in \Vab$, then for any sequence $(t_{k}) \subset \re$ it holds that $t_{k}v \in \Vab$. Thus, the limit above is zero, and we conclude.  
    \end{proof}

\begin{Rem}
    \normalfont It holds that $\Vab$ is a regular manifold at the origin if and only if $a_{1} = 1$ and $b_{1} = 0$ or $a_{2} = 1$ and $b_{2} = 0$. Henceforth, we exclude these cases from the statements. For surfaces $\Vab$ of type (1), we also suppose that $\gcd(a_{1}, a_{2}) = 1$, and then $\Vab$ is homeomorphic with an irreducible complex Pham-Brieskorn variety $\Va$. This implies that $\Vab \setminus \{0\}$ is connected. 

\end{Rem}

\begin{Prop}\label{l17} Let $\Vab$ be a mixed Pham-Brieskorn surface.
    \begin{enumerate}
        \item If $\Vab$ is of type (2) or (5), then it is Lipschitz normally embedded.
        \item If $\Vab$ is of type (3), then $\Vab$ is Lipschitz normally embedded.
        \item If $\Vab$ is of type (4) with $a_{2} =1$, then $\Vab$ is Lipschitz normally embedded.
        \item If $\Vab$ is of type (4) with $a_{2} \ge 2$, then $\Vab$ is not Lipschitz normally embedded.
        \item If $\Vab$ is of type (1) and $a_{1}+2b_{1}$ is an even number, then it is not Lipschitz normally embedded.
    \end{enumerate}
\end{Prop}
    \begin{proof}
        Mixed surfaces of types (2) or (5) are of homogeneous type with an isolated singularity at the origin, so the statement follows from \cite[Corollary 2.10]{kerner}. We apply the arc criterion \cite[Theorem 2.2]{birmen} for the other statements. First, let us consider types (3) and (4). For $z_{i} \in \co$, we take real coordinates $z_{i} = (x_{i}, y_{i}) \in \re^{2}$, where $i=1,2$. Up to the coordinate linear change given by $w_{2} = \xi z_{2}$, where $\xi^{a_{2}} = -1$, we suppose $\fab = \nm z_{1} \nm^{2b_{1}} - z_{2}^{a_{2}}\nm z_{2} \nm^{2b_{2}}$. The surfaces $\Vab$ are determined as an intersection with $\Im z_{2}^{a_{2}} = 0$, which consists of a finite union of hyperplanes in $\co^{2}$ passing through the origin. For each one of these lines, we take coordinates $(x_{2},y_{2})$ such that $\lambda x_{2} = y_{2}$, where $\lambda \in \re$, and substituting it in the equation $\nm z_{1}\nm^{2b_{1}} - \Re z_{2}^{a_{2}}\nm z_{2} \nm^{2b_{2}} = 0$, we conclude that $\Vab$ is a union of $\beta$-horns intersecting only at the origin, where $\beta = (a_{2}+2b_{2})/2b_{1}$. Notice that $\beta > 1$ and $\beta < 1$ correspond to types (3) and (4), respectively. In particular, in case (4), the components are smooth manifolds. Thus, if $a_{2} = 1$, $\Vab$ is a smooth manifold or a $\muab$-horn and it is normally embedded.

        Let us consider the case $a_{2} \ge 2$ for surfaces of type (4), in which the tangent cone of each component is 2-dimensional. Let $\lambda_{i}x_{2} = y_{2}$ for $i=1,2$, where $\lambda_{1} \neq \lambda_{2}$. Consider the following curves in distinct components of $\Vab\setminus \{0\}$:
        \begin{align*}
            \gamma_{1} : y_{1}^{2b_{1}} - x_{2}^{a_{2}+2b_{2}}\xi_{1} &= 0, \\
            \gamma_{2} : y_{1}^{2b_{1}} - x_{2}^{a_{2}+2b_{2}}\xi_{2} &= 0,
        \end{align*}
        where $\xi_{i} = \Re\left(1 + \ibf \lambda_{i}\right)^{a_{2}}\nm 1 + \ibf \lambda_{i} \nm^{2b_{2}}$. We may parametrize these curves by:
        \begin{align*}
            \gamma_{1}(t) &= (0, t, \tilde{\xi}_{1}t^{\frac{1}{\muab}},\lambda_{1}\tilde{\xi}_{1}t^{\frac{1}{\muab}}), \\
            \gamma_{2}(t) &= (0, t, \tilde{\xi}_{2}t^{\frac{1}{\muab}},\lambda_{2}\tilde{\xi}_{2}t^{\frac{1}{\muab}}),
        \end{align*}
        where $\tilde{\xi}_{i} = \xi_{i}^{-1/a_{2}+2b_{2}}$. Notice that if $a_{2}$ is even, then $\xi_{i} > 0$ and the expressions above are well-defined. Any path $\alpha_{t}(s)$ connecting the points $\gamma_{1}(t)$ and $\gamma_{2}(t)$ in these curves passes through the origin. Moreover, one has that the order of $\nm \gamma_{1}(t) - \gamma_{2}(t) \nm$ is $1/\muab$ and the order of $\nm \gamma_{1}(t) \nm + \nm \gamma_{2}(t)\nm$ is $1$. Since $\muab < 1$, the following limit diverges, which implies that $\Vab$ is not Lipschitz normally embedded:
        \begin{align*}
            \lim_{t \to 0} \frac{d_{i}(\gamma_{1}(t), \gamma_{2}(t))}{\nm \gamma_{1}(t) - \gamma_{2}(t)\nm} \ge \lim_{t \to 0} \frac{\ell(\alpha_{t})}{\nm \gamma_{1}(t) - \gamma_{2}(t)\nm } \ge \lim_{t \to 0} \frac{\nm \gamma_{1}(t) \nm + \nm \gamma_{2}(t) \nm}{\nm \gamma_{1}(t) - \gamma_{2}(t)\nm},
        \end{align*}
        where $\ell(\alpha)$ denotes the length of $\alpha$.
        
        For surfaces of type (3), all components have one-dimensional tangent cones that intersect only at the origin. We need to restrict to the case of distinct components $S_{1}$ and $S_{2}$. We claim that there exists $C > 0$ such that $\nm x \nm + \nm y \nm \le C\nm x-y \nm$ for all $x \in S_{1}$ and $y \in S_{2}$. On the contrary, there exist sequences $(p_{k}) \in S_{1}$ and $(q_{k}) \in S_{2}$ such that $\lim_{k}p_{k} = \lim_{k}q_{k} = 0$ and
        \begin{align*}
            \nm p_{k} \nm + \nm q_{k} \nm > c_{k}\nm p_{k} - q_{k} \nm,
        \end{align*}
        where $c_{k} > 0$ for all $k \in \mathbb{N}$ and $\lim_{k}c_{k} = \infty$. Let $t_{k} = \nm p_{k} \nm + \nm q_{k} \nm$ and define $x_{k} = \frac{1}{t_{k}}p_{k}$ and $y_{k} = \frac{1}{t_{k}}q_{k}$. Since these sequences are bounded, up to pass to a subsequence, there exist $v_{1}$ and $v_{2}$ in the tangent cones of $S_{1}$ and $S_{2}$, respectively, such that $\lim_{k}x_{k} = v_{1}$ and $\lim_{k}y_{k} = v_{2}$. Moreover, $\nm v_{1} \nm + \nm v_{2} \nm = 1$ and $v_{1}$ and $v_{2}$ are not both vanishing. On the other hand, 
        \begin{align*}
            v_{1} - v_{2} = \lim_{k} \frac{x_{k}-y_{k}}{t_{k}} < \lim_{k} \frac{1}{c_{k}} = 0,
        \end{align*}
        which contradicts the fact that the tangent cones of $S_{1}$ and $S_{2}$ intersect only at the origin. Let $\gamma_{i} \subset S_{i}$ be analytic curves in distinct components, for $i=1,2$. In polar coordinates, we may parametrize these paths as follows:
        \begin{align*}
            \gamma_{i}(t) = \left( r_{i}(t)^{\muab}e^{\ibf \theta_{i}(t)}, r_{i}(t)e^{\frac{\ibf 2\pi l_{i}}{a_{2}}}\right),
        \end{align*}
        for $i=1,2$ and $l_{1} \neq l_{2}$. For a fixed $t$, a path $\alpha_{i,t}$ connecting $\gamma_{i}(t)$ to the origin is given by:
        \begin{align*}
            \alpha_{i,t}(s) = \left( s^{\muab}e^{\ibf \theta_{i}(t)}, se^{\frac{\ibf 2\pi l_{i}}{a_{2}}}\right),
        \end{align*}
        where $s \in [0,r_{i}(t)]$ for $i=1,2$. It follows that $\ell(\alpha_{i,t}) = r_{i}(t) + o(r_{i}(t))$, where $o(r_{i}(t))$ denotes terms of higher order in $r_{i}(t)$ and $\ell$ denotes the length of the curve. Hence $\ell(\alpha_{1,t}) + \ell(\alpha_{2,t})$ and $\nm \gamma_{1} \nm + \nm\gamma_{2}\nm$ have the same order in $t$. Thus, the following limit is bounded:
        \begin{align*}
            \lim_{t \to 0} \frac{d_{i}(\gamma_{1}(t), \gamma_{2}(t))}{\nm \gamma_{1}(t) - \gamma_{2}(t)\nm} \le \lim_{t \to 0} \frac{\ell(\alpha_{1,t}) + \ell(\alpha_{2,t})}{\nm \gamma_{1}(t) \nm + \nm\gamma_{2}(t)\nm}\cdot \frac{\nm \gamma_{1}(t) \nm + \nm\gamma_{2}(t)\nm}{\nm \gamma_{1}(t)-\gamma_{2}(t)\nm} \le C',
        \end{align*}
        for some $C' > 0$. This implies that the inner and outer distances have the same order, and the result follows from the arc criterion.
        
        For mixed surfaces of type (1), up to the coordinate linear change given by $w_{2} = \xi z_{2}$, where $\xi^{a_{2}} = -1$, we suppose $\fab = z_{1}^{a_{1}}\nm z_{1} \nm^{2b_{1}} - z_{2}^{a_{2}}\nm z_{2} \nm^{2b_{2}}$. Then, $\Vab$ contains the curve $\gamma$ defined by $x_{1}^{a_{1}+2b_{1}} - x_{2}^{a_{2}+2b_{2}} = 0$ whose branches can be parameterized by $\gamma^{\pm} = (\pm t^{\muab},t)$, where $t \in \re$ is positive and $\muab = (a_{2}+2b_{2})/(a_{1}+2b_{1}) > 1$. The outer distance $\nm \gamma^{+}(t) - \gamma^{-}(t) \nm$ has order $\muab$. Fix $t_{0} > 0$ and let $\lambda(s)$ be a path connecting the points $\gamma^{+}(t_{0})$ and $\gamma^{-}(t_{0})$, respectively. If $\lambda(s)$ contains the origin, it is clear that the length of $\lambda$ satisfies $\ell(\lambda)$$ \ge 2t_{0}$. On the contrary, we apply an argument used in \cite[Proposition 3.4]{samle}. The map $\pi : \Vab\setminus \{0\} \longrightarrow \co^{*}$ given by $\pi(z_{1},z_{2}) = z_{2}$ is a Lipschitz $a_{1}$-finite covering map. It follows that $\pi(\lambda(s)) \subset \co^{*}$ is a loop based on $t_{0}$, which will denote by $\delta$ (see Figure \ref{fig2}). Furthermore, since $\pi^{-1}(t_{0})$ contains a finite number of points, $\delta$ is not homotopically trivial. This shows that the inner distance $\ell(\lambda(s)) \ge \ell(\delta) \ge 2t_{0}$, where $\ell(\lambda(s))$ and $\ell(\delta)$ denote the length of $\lambda$ and $\delta$, respectively. In both cases, we conclude that, for a sufficiently small $t> 0$, $d_{i}(\gamma^{+}(t),\gamma^{-}(t)) \ge 2t$. This implies that the limit       
        \begin{align*}
            \lim_{t\to 0}\frac{d_{i}(\gamma^{+}(t), \gamma^{-}(t))}{\nm \gamma^{+}(t) - \gamma^{-}(t) \nm}
        \end{align*}
        diverges and $\Vab$ is not normally embedded.

        \begin{figure}[ht!]
        \centering
        \includegraphics[scale=.75]{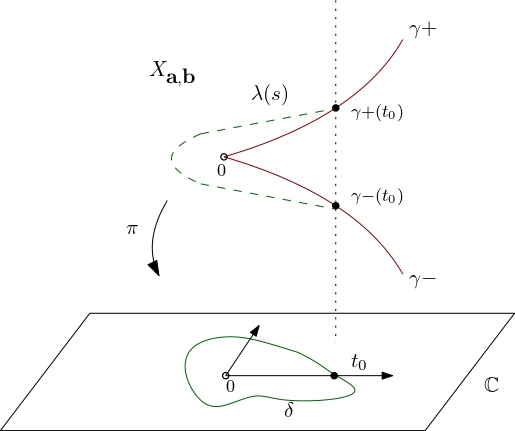}
        \caption{Projection of $\lambda(s)$ on $\co^{*}$}
        \label{fig2}
        \end{figure}
    \end{proof}
    
    See also \cite[Corollary 4.3]{tibar} for the non-normal embedding property of complex Pham-Brieskorn varieties in any dimension. Taking into account the description in the proof of Proposition \ref{l17}, one has the following.

    \begin{Prop}\label{p1}
        Each component of a mixed Pham-Brieskorn surface of type (1), (2), (4), or (5) is inner bi-Lipschitz equivalent to the metric cone $H_{1}$.
    \end{Prop}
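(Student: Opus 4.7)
The plan is to apply Birbrair's classification (Theorem \ref{inde1}) to each connected component $Y$ of the surface germ $(\Vab,0)$. The origin is an isolated singularity in all four types: in types (1) and (2) this is part of the computation in Lemma \ref{l1}; in types (4) and (5) one checks directly that the real Jacobian of $\fab$ has full rank on $\Vab\setminus\{0\}$, using the fact that $\fab$ cannot vanish on $\{z_1\ne 0,z_2=0\}$ or on $\{z_1=0,z_2\ne 0\}$. Theorem \ref{inde1} then yields $Y\simeq H_\beta$ for some $\beta\ge 1$, and to prove $\beta=1$ it suffices, by Theorem \ref{inde}, to exhibit two semialgebraic arcs $\gamma_1,\gamma_2\subset Y$ with $\gamma_i(t)=tu_i+o(t)$ and $u_1,u_2$ linearly independent over $\re$; as noted just before Theorem \ref{inde}, such a pair has order of contact exactly $1$.

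For types (2) and (5) the construction is immediate: the hypotheses $a_1+2b_1=a_2+2b_2$ and $2b_1=a_2+2b_2$ make $\fab$ weighted homogeneous with equal weights on $z_1$ and $z_2$, so $\Vab$ is a real cone and coincides with its tangent cone. Each connected component $Y$ is then a $2$-dimensional subcone, from which one selects any two real-linearly-independent rays as $\gamma_1,\gamma_2$.

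For types (1) and (4) I would parametrize $\Vab\setminus\{0\}$ in polar coordinates $z_j=r_j e^{i\theta_j}$. The defining equation becomes $r_1^{a_1+2b_1}e^{ia_1\theta_1}=-r_2^{a_2+2b_2}e^{ia_2\theta_2}$, which forces the scaling $r_1=r_2^{\rho}$ with $\rho=(a_2+2b_2)/(a_1+2b_1)>1$ together with a discrete angular resonance. In type (1) the projection $\pi(z_1,z_2)=z_2$ therefore exhibits $\Vab\setminus\{0\}$ as an $a_1$-fold unramified covering of $\co^*$; since $\co^*$ is connected, the restriction of $\pi$ to any connected component $Y$ remains surjective. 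Choosing linearly independent unit vectors $v_1,v_2\in\co$ and lifting the rays $t\mapsto tv_i$ to $Y$ yields arcs $\gamma_i(t)=(z_1^{(i)}(t),tv_i)$ with $|z_1^{(i)}(t)|=|v_i|^{\rho}t^{\rho}=o(t)$, hence $\gamma_i(t)=t(0,v_i)+o(t)$, tangent to linearly independent vectors in the tangent cone $\{z_1=0\}$ given by Lemma \ref{tgcsup}. Type (4) is handled symmetrically by projecting onto $z_1$: each of the $a_2$ leaves is parametrized by $z_1\in\co^*$, and lifted rays produce arcs tangent to linearly independent vectors in $\{z_2=0\}$.

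The step I expect to demand the most care is exactly this last one: ensuring that the two arcs constructed in types (1) and (4) lie in the \emph{same} connected component. The reduction above turns this into the standard fact that a covering of a connected space by a connected total space is surjective, which passes to each component of the total space. Combining everything with Theorem \ref{inde} gives $\beta\le 1$, hence $\beta=1$ and $Y$ is inner bi-Lipschitz equivalent to the metric cone $H_1$, as claimed.
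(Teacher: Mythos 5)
Your proof is correct and follows essentially the same route as the paper: Birbrair's classification (Theorem \ref{inde1}) together with Theorem \ref{inde}, getting $\beta=1$ from two semialgebraic arcs with linearly independent tangent vectors, which exist because the relevant tangent cones from Lemma \ref{tgcsup} are $2$-dimensional. The only difference is that you make explicit, via the covering/graph description of the components in types (1) and (4), why the two arcs can be chosen in the \emph{same} connected component, a point the paper's proof leaves implicit.
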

    \begin{proof}
        Recall Theorem \ref{inde1} and notice that each component of the mixed surface germ has an isolated singularity at the origin under the hypothesis above. By the previous lemma, $\Vab$ has a 2-dimensional tangent cone, thus we can choose semialgebraic arcs with linearly independent tangent vectors. This implies that the order of contact is 1, and we conclude.
    \end{proof}

\begin{Exam}
    \normalfont
    Proposition \ref{p1} shows that mixed surfaces of type (3) are the only interesting cases from the inner geometry point of view. Indeed, let $\beta \ge 1$ be a rational number and $b,d$ non-negative integers such that $\beta = (2d+1)/2b$. Then the $\beta$-horn can be described as $h_{b,d}^{-1}(0)$, where $h_{b,d} = \nm z_{1} \nm^{2b} + z_{2}^{1+d}\cj{z}_{2}^{d}$. Given a fixed $\beta$, any choice of $b,d$ as before provides inner bi-Lipschitz equivalent sets, because this is a complete invariant.

    Moreover, the mixed function $h_{b,d}$ is topologically equivalent to the submersion $\nm z_{1} \nm^{2b} + z_{2}$. The results of Section \ref{lipeq} show that this does not extend to bi-Lipschitz equivalence. Alternatively, note that the tangent cone of a horn is a line segment while the tangent cone of a submersion is a plane (see Figure \ref{fig}). Thus, Theorem \ref{tsam} yields a second reason for this latter claim.    

    \begin{figure}[ht!]
    \centering
    \includegraphics[scale=0.7]{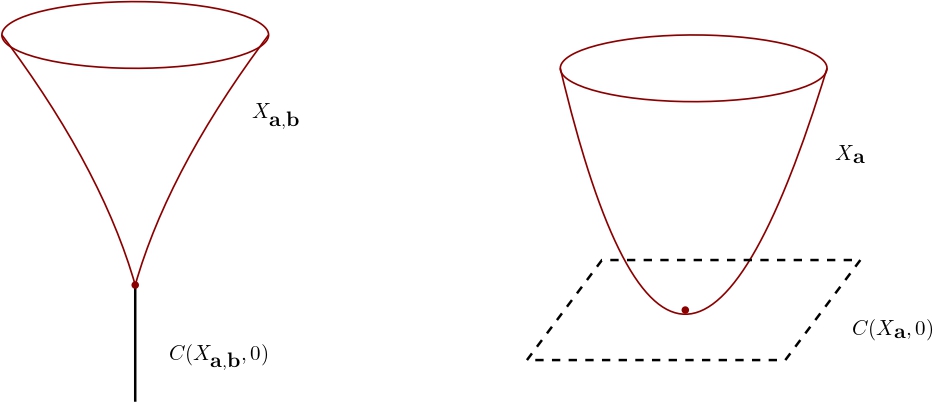}
    \caption{Mixed surfaces and the respective tangent cones for $a_{1} = 0$}
    \label{fig}
    \end{figure}
\end{Exam}

\begin{Prop}\label{p2}
     Under the hypothesis on surfaces of type (1) in Proposition \ref{l17}, the mixed Pham-Brieskorn surfaces of types (1) and (3) are preserved under outer bi-Lipschitz equivalence.
\end{Prop}
    \begin{proof}
        Let $\Vab, \Vcd$ be mixed Pham-Brieskorn surfaces, where $\Vab$ is of type (1). If $\Vab$ and $\Vcd$ are outer bi-Lipschitz equivalent, then the non-normal embedding property is preserved. This implies that $\Vcd$ is of type (1), (3), or of type (4) with $c_{2} \ge 2$. Since the tangent cones of mixed surfaces of types (1) and (3) have distinct dimensions and $\Vab\setminus\{0\}$ would have more than one component in the second case, $\Vcd$ is also of type (1). The same reasoning applies if $\Vab$ is of type (3) and we conclude.
    \end{proof}
In general, there is no geometric relation with respect to the outer metric between mixed surfaces and their topological normal forms, as shown in the next corollary.
\begin{Cor} \label{su3}
    Under the hypothesis on surfaces of type (1) in Proposition \ref{l17}, let $\Vab$ be a mixed Pham-Brieskorn surface with $b_{1}, b_{2} \neq 0$. Suppose that:
    \begin{enumerate}
        \item $a_{1} = a_{2}$ and $\muab > 1$; or
        \item $a_{1} < a_{2}$ and $\muab = 1$.
    \end{enumerate}
   Then $\Vab$ is not outer bi-Lipschitz equivalent to its topological normal form $\Va$.
\end{Cor}

A more general criterion can be stated as follows.

\begin{The}\label{radial}
    Let $\Vab$ and $\Vcd$ be two mixed Pham-Brieskorn surfaces of type (1) such that $a_{1} \ge 2$ and $\mucd < \muab$. Then there does not exist a subanalytic bi-Lipschitz homeomorphism $\phi : (\Vab,0) \longrightarrow (\Vcd,0)$.
\end{The}
    \begin{proof}
        The proof is based on the argument of \cite[Example 2.1]{Fernandes2003}. Let $r \ge 0$ and define the following branches in $\Vab$:
        \begin{align*}
           \gamma_{1}(r) = \left( r^{\muab}, re^{i\frac{\pi}{a_{2}}}\right) \quad \text{and} \quad \gamma_{2}(r) = \left( r^{\muab}e^{i\frac{2\pi}{a_{1}}}, re^{i\frac{\pi}{a_{2}}}\right).
        \end{align*}
        Next, we look at the inner distance of these arcs. Let $\lambda(t)$ be a minimizing path connecting $\gamma_{1}(r)$ and $\gamma_{2}(r)$, where $r$ is fixed. We may apply the same argument in the proof of the last item of Proposition \ref{l17} to conclude that the projection of $\lambda(t)$ on the $z_{2}$-axis is a non-trivial loop based on $re^{i\frac{\pi}{a_{2}}}$ whose length is bounded from below by $2r$. Therefore, the limit
        \begin{align}\label{inndt}
            \lim_{r \to 0} \frac{d_{i}(\gamma_{1}(r),\gamma_{2}(r))}{r} \ge 2.
        \end{align}
    
        Suppose the existence of such a bi-Lipschitz map $\phi : (\Vab,0) \longrightarrow (\Vcd,0)$. It follows that we may parametrize the images of $\gamma_{1}, \gamma_{2}$ by
        \begin{align*}
            \sigma_{k}(r) = \left(r^{\mucd}e^{i\frac{\omega_{k}(r)c_{2}-\pi}{c_{1}}}, re^{i\omega_{k}(r)}\right) \in \phi\left(\gamma_{k}\right) \cap \mathbb{S}^{3}_{r},
        \end{align*}
        where $k=1,2$. The tangency order is preserved by \cite[Theorem 1.1]{Fernandes2003}, therefore
        \begin{align}\label{eqex}
            \text{tord}_{r}\nm \sigma_{1}(r) - \sigma_{2}(r)\nm = \muab,
        \end{align}
        where $\text{tord}$ denotes the tangency order. On the other hand, we claim that $\lim_{r \to 0} \omega_{1}(r) - \omega_{2}(r) = 2\pi l$ for some integer $l$ such that $lc_{2} \equiv 0 \mod c_{1}$. Otherwise, we get
        \begin{align*}
            \text{tord}_{r}\nm \sigma_{1}(r) - \sigma_{2}(r)\nm = 1,\quad &\text{if} \; \lim_{r \to 0} \omega_{1}(r) - \omega_{2}(r) \notin 2\pi\mathbb{Z}, \\
            \text{tord}_{r}\nm \sigma_{1}(r) - \sigma_{2}(r)\nm \le \mucd, \quad &\text{if} \; \lim_{r \to 0} \omega_{1}(r) - \omega_{2}(r) = 2\pi l \;\; \text{for} \;\; lc_{2} \not\equiv 0 \mod c_{1}, 
        \end{align*}
        which is not possible by \eqref{eqex} and the fact that $\muab > \mucd$.  In this case, up to a reparametrization of the curves $\sigma_{k}$, we may suppose $l = 0$.
        
        Let us consider again the inner contact of $\sigma_{1}$ and $\sigma_{2}$. A path $\rho(t) \subset \Vcd$ connecting these curves is
        \begin{align*}
            \rho(t) = \left(r^{\mucd}e^{i\frac{\kappa(t)c_{2}-\pi}{c_{1}}}, re^{i\kappa(t)}\right),
        \end{align*}
        where $\kappa(t) = t\omega_{1}(r) + (1-t)\omega_{2}(r)$ and $t \in [0,1]$. The condition $\lim_{r \to 0}\omega_{1}(r) - \omega_{2}(r) = 0$ now implies that the length $\ell(\rho(t))$ has order higher than $1$. Therefore,
        \begin{align*}
            \lim_{r \to 0} \frac{d_{i}(\gamma_{1}(r),\gamma_{2}(r))}{r} \le \lim_{r \to 0} \frac{\ell(\rho(t))}{r} = 0,
        \end{align*}
        which contradicts \eqref{inndt}, provided that $\phi$ is bi-Lipschitz and also preserves the inner distance.    
    \end{proof}
Let $\Vab$ and $\Vcd$ be two mixed surfaces of type (3). We already know that these are unions of $\muab$ and $\mucd$-horns, respectively. Since this index is a complete invariant, these are outer bi-Lipschitz equivalent if and only if $\muab = \mucd$. Concerning the ambient equivalence, we have the following.
\begin{Prop}
     Under the hypothesis on surfaces of type (1) in Proposition \ref{l17}, let $\Vab$ and $\Vcd$ be two ambient bi-Lispchitz equivalent mixed Pham-Brieskorn surfaces such that $a_{1}, c_{1} \ge 1$. If $\Vab$ is of type (1) or (2), then $\Vab$ and $\Vcd$ are of the same type.
\end{Prop}
    \begin{proof}
        The topological types of $\Vab$ and $\Vcd$ are the same, and Theorem \ref{etsu} applies. Moreover, the ambient equivalence implies the outer, and we apply Proposition \ref{p2}.
   \end{proof}
The ambient geometry of $\Vab$ is related to $\Va$ as follows.
\begin{Prop}\label{su5}
    Let $\Vab$ be a mixed Pham-Brieskorn surface of type (1) or (2) which is ambient bi-Lipschitz equivalent to a complex analytic curve germ in $\co^{2}$. Then $\Vab$ and its topological normal form $\Va$ are ambient bi-Lipschitz equivalent.
\end{Prop}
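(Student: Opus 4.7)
The plan is to reduce the proposition to the classical fact that two germs of complex plane curves with the same embedded topological type are ambient bi-Lipschitz equivalent, and then combine this with the topological normal form provided by Proposition \ref{topfor}.

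First I would unpack the hypothesis: there is a germ of bi-Lipschitz homeomorphism $\phi : (\co^{2},0) \longrightarrow (\co^{2},0)$ such that $\phi(\Vab) = C$, where $C \subset \co^{2}$ is a complex analytic curve germ. Since $\phi$ is in particular an ambient homeomorphism, the pairs $(\co^{2},\Vab)$ and $(\co^{2},C)$ have the same embedded topological type.

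Second, since $\Vab$ is of type (1) or (2), one has $a_{1},a_{2} \ge 1$, so Proposition \ref{topfor} produces a germ of homeomorphism of $(\co^{2},0)$ that sends $\Vab$ onto $\Va$. Composing with the hypothesis, $C$ and $\Va$ become two complex analytic plane curve germs with the same embedded topological type.

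Third, I would invoke the classical fact that the embedded topological type of a complex plane curve germ in $(\co^{2},0)$ determines its ambient bi-Lipschitz type; this follows from Zariski equisingularity of plane curves together with bi-Lipschitz triviality along Zariski equisingular families (compare the setting and references in \cite{samje} and \cite{tibar}). This yields a germ of bi-Lipschitz homeomorphism $\psi : (\co^{2},0) \longrightarrow (\co^{2},0)$ with $\psi(C) = \Va$, and then $\psi \circ \phi$ is the desired ambient bi-Lipschitz equivalence between $\Vab$ and $\Va$.

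The main obstacle — and really the only non-routine ingredient — is the third step: identifying and citing the correct plane-curve classification. Once that is secured, the argument is a three-term composition of equivalences. A more self-contained route would have to read off the ambient bi-Lipschitz invariants of $C$ (e.g.\ its Puiseux data or carousel decomposition) directly from those of $\Vab$, which would be substantially heavier than appealing to the classification of plane curve singularities.
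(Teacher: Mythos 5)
Your proposal is correct and follows essentially the same route as the paper: reduce via Proposition \ref{topfor} to two complex plane curve germs with the same embedded topological type, then invoke that this type is a complete invariant for ambient bi-Lipschitz equivalence of plane curves, and compose. The only difference is the citation for that key fact: the paper appeals directly to \cite[Theorem 2.6]{pich}, whereas you sketch it via Zariski equisingularity and bi-Lipschitz triviality of equisingular families.
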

    \begin{proof}
    Each complex analytic curve as in the statement has the same topological type as $\ga$ and, by \cite[Theorem 2.6]{pich}, this is a complete invariant.
    \end{proof}
In particular, a mixed surface as in Corollary \ref{su3} is never ambient bi-Lipschitz equivalent to a complex analytic plane curve germ.

\section*{Acknowledgments}

The author is grateful to his thesis advisor, Professor Maria Ruas (in memoriam), for helpful discussions and comments. He
also thanks Davi Medeiros for insightful conversations and for suggesting the argument applied in Theorem 6.8. This
work was partially completed during a sandwich doctorate program at the Instituto de Matemáticas of the Universidad
Nacional Autónoma de México at Cuernavaca, and the author would like to thank its community for their hospitality and
support.


\subsection*{Conflict of interest} The author declares that there is no conflict of interest.

\subsection*{Data Availability} Data availability is not applicable to this article.

        \bibliographystyle{siam}
	\bibliography{biblio} 
	\addcontentsline{toc}{section}{References}

\end{document}